\newtheorem{theorem}{Theorem}
\newtheorem{proposition}[theorem]{Proposition}
\newtheorem{lemma}[theorem]{Lemma}
\newtheorem{corollary}[theorem]{Corollary}
\newtheorem{remarks}[theorem]{Remarks}
\newtheorem{remark}[theorem]{Remark}
\numberwithin{equation}{section}
\def\lcm{\operatorname{lcm}}
\newcommand{\Z}{\mathbb{Z}}
\newcommand{\R}{\mathbb{\R}}
\begin{document}

\title[Normalising graphs of groups]{Normalising graphs of groups} 
\author[C. Krattenthaler and T.\,W. M\"uller]{C. Krattenthaler$^\dagger$ and Thomas M\"uller$^*$} 

\address{$^{\dagger}$Fakult\"at f\"ur Mathematik, Universit\"at Wien, 
Oskar-Morgenstern-Platz~1, A-1090 Vienna, Austria.
WWW: {\tt http://www.mat.univie.ac.at/\lower0.5ex\hbox{\~{}}kratt}.}

\address{$^*$School of Mathematical Sciences, Queen Mary
\& Westfield College, University of London,
Mile End Road, London E1 4NS, United Kingdom.}

\thanks{$^\dagger$Research partially supported by the Austrian
Science Foundation FWF, grant S50-N15,
in the framework of the Special Research Program
``Algorithmic and Enumerative Combinatorics"\newline\indent
$^*$Research supported by Lise Meitner Grant M1661-N25 of the Austrian
Science Foundation FWF}

\subjclass[2010]{Primary 20E06;
Secondary 20E08}

\keywords{Graphs of groups, fundamental group, amalgamation}

\begin{abstract}
We discuss a partial 
normalisation of a finite graph of finite groups $(\Gamma(-), X)$
which leaves invariant the fundamental group. In conjunction with an
easy graph-theoretic result, this provides a flexible and rather
useful tool in the study of finitely generated virtually free
groups. Applications discussed here include (i) an important
inequality for the number of edges in a Stallings decomposition
$\Gamma \cong \pi_1(\Gamma(-), X)$ of a finitely generated virtually
free group, (ii) the proof of  equivalence of a number of conditions
for such a group to be `large', as well as (iii) the classification up
to isomorphism of virtually free groups of (free) rank $2$. We also
discuss some number-theoretic consequences of the last result.  
\end{abstract}

\maketitle

\section{Introduction}
\label{sec:intro}
 The purpose of this paper is to introduce, and demonstrate the usefulness of, a technique for partially normalising the presentation of a finitely generated virtually free group as the fundamental group of a finite graph of finite groups. Roughly speaking, our method avoids trivial amalgamations along a maximal tree of the connected graph underlying such a representation. This result, Lemma~\ref{Lem:Normalise}, in conjunction with an almost trivial graph-theoretic result (Lemma~\ref{Lem:TreeOrient}), provides us with a flexible and  rather powerful tool in the study of such groups. We demonstrate the usefulness of our approach by describing various  applications: (i) a short and elegant argument establishing the (well-known) classification of virtually infinite-cyclic groups due originally to Stallings and Wall, (ii) the classification of virtually free groups of free rank $2$ together with some number-theoretic consequences,\footnote{See Section~\ref{Sec:Prelim} for the definition of the free rank.} and (iii) the equivalence of a number of conditions on a finitely generated virtually free group $\Gamma$ expressing, in one way or other, the fact that $\Gamma$ is large; 
cf.\ Propositions~\ref{Prop:InfCyc}, \ref{Prop:mu=2}, and \ref{Prop:chi<0Equivs}. In Section~\ref{Sec:Ineq}, we also show  that, (iv) for a normalised decomposition $(\Gamma(-), X)$ of a finitely generated virtually free group $\Gamma$, the number of geometric edges
of the graph $X$ is bounded above by the free rank of $\Gamma$; 
cf.\ Lemma~\ref{Lem:EEst}. This important observation plays a role in the proof of Proposition~\ref{Prop:mu=2} below, as well as in establishing certain finiteness results for the class of finitely generated virtually free groups with specified information concerning the number of free subgroups of finite index. 
For another, recent application of the normalisation provided by
Lemma~\ref{Lem:Normalise} see~\cite{KrMuAJ}.

\section{Some preliminaries on finitely generated virtually free groups}
\label{Sec:Prelim}

 Our  notation and terminology here follows Serre's book
\cite{Serre2}; 
in particular, the category of graphs used is described in
\cite[\S2]{Serre2}. 
This category deviates slightly from the usual notions in graph theory.
Specifically, a {\it graph} $X$ consists of two sets:
$E(X)$, the set of (directed) {\it edges}, and $V(X)$, the set of
{\it vertices}. The set $E(X)$ is endowed with a fixed-point-free involution
${}^-: E(X) \rightarrow E(X)$ ({\it reversal of orientation}), and there are
two functions $o,t: E(X)\rightarrow V(X)$ assigning to an edge $e\in
E(X)$ its {\it origin} $o(e)$ and {\it terminus} $t(e)$, such that
$t(\bar{e}) = o(e)$. 
The reader should note that, according to the above
definition, graphs may have loops (that is, edges $e$ with
$o(e)=t(e)$) and multiple edges (that is, several edges with
the same origin and the same terminus).  
An {\it orientation} $\mathcal{O}(X)$ consists of a choice of exactly 
one edge in each pair $\{e, \bar{e}\}$ 
(this is indeed always a {\it pair} -- even for loops --
since, by definition, the involution
${}^-$ is fixed-point-free). Such a pair is called a
{\it geometric edge}. 

 Let $\Gamma$ be a finitely generated virtually free group with
Stallings decomposition\break
 $(\Gamma(-), X)$; that is, $(\Gamma(-), X)$ is
a finite graph of finite groups with fundamental group 
$\pi_1(\Gamma(-), X) \cong \Gamma$. 
If $\mathfrak{F}$ is a free subgroup of finite
index in $\Gamma$ then, following an idea of C.\,T.\,C. Wall, one
defines the (rational) Euler characteristic $\chi(\Gamma)$ of $\Gamma$
as 
\begin{equation}
\label{Eq:EulerChar}
\chi(\Gamma) = - \frac{\mathrm{rk}(\mathfrak{F}) - 1}{(\Gamma:\mathfrak{F})}.
\end{equation}
(This is well-defined in view of Schreier's index formula  in \cite{Schreier}.)  In terms of the above decomposition of $\Gamma$, we have
\begin{equation}
\label{Eq:EulerCharDecomp}
\chi(\Gamma) = \sum_{v\in V(X)} \frac{1}{\vert
\Gamma(v)\vert}\,-\,\sum_{e\in\mathcal{O}(X)} \frac{1}{\vert
\Gamma(e)\vert}. 
\end{equation}
Equation~\eqref{Eq:EulerCharDecomp} reflects the fact that, in our
situation, the Euler characteristic in the sense of Wall coincides
with the equivariant Euler characteristic $\chi_T(\Gamma)$ of $\Gamma$
relative to the tree $T$ canonically associated with $\Gamma$ in the
sense of Bass--Serre theory; cf.\ \cite[Chap.~IX, Prop.~7.3]{Brown} or
\cite[Prop.~14]{Serre1}. We remark that a  finitely generated
virtually free group $\Gamma$ is largest among finitely generated
groups in the sense of Pride's preorder \cite{Pride} (i.e., $\Gamma$
has a subgroup of finite index, which can be mapped onto the free
group of rank $2$) if, and only if, $\chi(\Gamma)<0$; see
Proposition~\ref{Prop:chi<0Equivs} in Section~\ref{Sec:Large}.   

 Denote by $m_\Gamma$ the least common multiple
of the orders of the finite subgroups in $\Gamma$, so that, again in
terms of the above Stallings decomposition of $\Gamma$,  
\[
m_\Gamma = \lcm\big\{\vert\Gamma(v)\vert:\, v\in V(X)\big\}.
\]
(This formula essentially follows from the well-known fact that a
finite group has a fixed point when acting on a tree.)  
The type $\tau(\Gamma)$ of a finitely generated virtually free group
$\Gamma \cong \pi_1(\Gamma(-), X)$ is defined as the  
tuple
\[
\tau(\Gamma) = \big(m_\Gamma; \zeta_1(\Gamma), \ldots, \zeta_\kappa(\Gamma), \ldots, \zeta_{m_\Gamma}(\Gamma)\big),
\]
where the  
$\zeta_\kappa(\Gamma)$'s are integers indexed by the divisors of
$m_\Gamma$, given by 
\[
\zeta_\kappa(\Gamma) = 
\big\vert\big\{e\in \mathcal{O}(X):\, \vert\Gamma(e)\vert
\,\big\vert\, \kappa\big\}\big\vert\,-\, \big\vert\big\{v\in V(X):\,
\vert\Gamma(v)\vert \,\big\vert\, \kappa\big\}\big\vert. 
\]
It can be shown that the type $\tau(\Gamma)$ is in fact an invariant
of the group $\Gamma$, i.e., independent of the particular
decomposition of $\Gamma$ in terms of a graph of groups $(\Gamma(-),
X)$, and that two finitely generated virtually free groups $\Gamma_1$
and $\Gamma_2$ contain the same number of free subgroups of index $n$
for each positive integer $n$ if, and only if, $\tau(\Gamma_1) =
\tau(\Gamma_2)$; cf.\ \cite[Theorem~2]{MuDiss}. We have
$\zeta_\kappa(\Gamma)\geq0$ for $\kappa<m_\Gamma$ and
$\zeta_{m_\Gamma}(\Gamma)\geq-1$ with equality occurring in the latter
inequality if, and only if, $\Gamma$ is the fundamental group of a
tree of groups; cf.\ \cite[Prop.~1]{MuEJC} or \cite[Lemma~2]{MuDiss}. 
We observe that, as a
consequence of (\ref{Eq:EulerCharDecomp}), the Euler characteristic of
$\Gamma$ can be expressed in terms of the type $\tau(\Gamma)$ via 
\begin{equation}
\label{Eq:EulerTypeRewrite}
\chi(\Gamma) = - m_\Gamma^{-1} \sum_{\kappa \mid m_\Gamma}
\varphi(m_\Gamma/\kappa) \,\zeta_\kappa(\Gamma), 
\end{equation}
where $\varphi$ is Euler's totient function. It follows in particular 
that, if two finitely generated virtually free groups have the same
number of free subgroups of index $n$ for every $n$, then their Euler
characteristics must coincide. 

 Define a
\textit{torsion-free $\Gamma$-action} on a set $\Omega$ to be a
$\Gamma$-action on $\Omega$ which is free when restricted to finite
subgroups, and let 
\begin{equation} \label{eq:gla} 
g_\lambda(\Gamma):= \frac{\mbox{number of torsion-free
$\Gamma$-actions on a set with $\lambda m_\Gamma$ elements}}{(\lambda
m_\Gamma)!},\quad \lambda\geq0; 
\end{equation}
in particular, $g_0(\Gamma)=1$. The sequences
$\big(f_\lambda(\Gamma)\big)_{\lambda\geq1}$ and
$\big(g_\lambda(\Gamma)\big)_{\lambda\geq0}$ are related via the
Hall-type convolution formula\footnote{See \cite[Cor.~1]{MuDiss}, or
\cite[Prop.~1]{DM} for a more general result.} 
\begin{equation}
\label{Eq:Transform}
\sum_{\mu=0}^{\lambda-1} g_\mu(\Gamma) f_{\lambda-\mu}(\Gamma) =
m_\Gamma \lambda g_\lambda(\Gamma),\quad \lambda\geq1. 
\end{equation}
Introducing the generating functions 
\[
F_\Gamma(z) := \sum_{\lambda\geq0} f_{\lambda+1}(\Gamma) z^\lambda
\,\mbox{ and }\, G_\Gamma(z):= \sum_{\lambda\geq0} g_\lambda(\Gamma)
z^\lambda, 
\]
Equation~\eqref{Eq:Transform} is seen to be equivalent to the relation
\begin{equation}
\label{Eq:GFTransform}
F_\Gamma(z) = m_\Gamma \frac{d}{dz}\big(\log G_\Gamma(z)\big).
\end{equation}
Moreover, a careful analysis of the universal mapping property
associated with the presentation $\Gamma\cong\pi_1(\Gamma(-), X)$
leads to the explicit formula 
\begin{equation}
\label{Eq:gExplicit}
g_\lambda(\Gamma) = \frac{\prod\limits_{e\in\mathcal{O}(X)} (\lambda
m_\Gamma/\vert\Gamma(e)\vert)!\,\vert\Gamma(e)\vert^{\lambda
m_\Gamma/\vert\Gamma(e)\vert}}{\prod\limits_{v\in V(X)}(\lambda
m_\Gamma/\vert\Gamma(v)\vert)!\,\vert\Gamma(v)\vert^{\lambda
m_\Gamma/\vert\Gamma(v)\vert}},\quad \lambda\geq0 ,
\end{equation}
for $g_\lambda(\Gamma)$, where $\mathcal{O}(X)$ is any orientation of
$X$; cf.\ \cite[Prop.~3]{MuDiss}. 

 Define the \textit{free rank} $\mu(\Gamma)$ of a finitely generated
virtually free group $\Gamma$ to be the rank of a free subgroup of
index $m_\Gamma$ in $\Gamma$ (existence of such a subgroup follows,
for instance, from Lemmas~8 and 10 in \cite{Serre2}; it need not be 
unique, though). We note that, in view of \eqref{Eq:EulerChar}, the
quantity $\mu(\Gamma)$ is connected with the Euler characteristic of
$\Gamma$ via  
\begin{equation}
\label{Eq:FreeEuler}
\mu(\Gamma) + m_\Gamma \chi(\Gamma) = 1,
\end{equation}
which shows in particular that $\mu(\Gamma)$ is
well-defined. 
From Formula~(\ref{Eq:gExplicit}) it may be deduced that the sequence
$g_\lambda(\Gamma)$ is of hypergeometric type and that its generating
function $G_\Gamma(z)$ satisfies a homogeneous linear differential
equation 
\begin{equation}
\label{Eq:G(z)Diff}
\theta_0(\Gamma) G_\Gamma(z) \,+ \,(\theta_1(\Gamma) z - m_\Gamma)
G'_\Gamma(z) \,+ \,\sum _{\mu=2}^{\mu(\Gamma)} \theta_\mu(\Gamma)
z^\mu G^{(\mu)}_\Gamma(z) = 0 
\end{equation}
of order $\mu(\Gamma)$ with integral coefficients $\theta_\mu(\Gamma)$ given by
\begin{equation}
\label{Eq:G(z)DiffCoeffs}
\theta_\mu(\Gamma) = \frac{1}{\mu!} \sum_{j=0}^\mu (-1)^{\mu-j}
\binom{\mu}{j} m_\Gamma (j+1) \prod_{\kappa\mid
  m_\Gamma}\,\underset{(m_\Gamma, k) = \kappa}{\prod_{1\leq k\leq
    m_\Gamma}} (jm_\Gamma + k)^{\zeta_\kappa(\Gamma)},\quad 0\leq
\mu\leq \mu(\Gamma); 
\end{equation}
cf.\ \cite[Prop.~5]{MuDiss}.

\section{Normalising a finite graph of groups}
\label{Sec:GoGNorm}
 It will be important to be able to represent a finitely generated
virtually free group $\Gamma$ by a graph of groups avoiding trivial
amalgamations along a maximal tree. This is achieved via the following. 

\begin{lemma}[\sc Normalisation]
\label{Lem:Normalise}
Let $(\Gamma(-), X)$ be a {\em(}connected{\em)} graph of groups with
fundamental 
group $\Gamma,$ and suppose that $X$ has only finitely many
vertices. Then there exists a graph of groups $(\Delta(-), Y)$ with
$\vert V(Y)\vert < \infty$ and a spanning tree $T$ in $Y,$ such that
$\pi_1(\Delta(-),Y) \cong \Gamma$, and such that\footnote{The notation 
used in Equation~\eqref{eq:normal} follows Serre; 
see D\'ef.~8 in \cite[Sec.~4.4]{Serre2}.} 
\begin{equation} \label{eq:normal}
\Delta(e)^e \neq \Delta(t(e))\,\mbox{ and }\, \Delta(e)^{\bar{e}} \neq
\Delta(o(e)),\quad 
\text {for }e\in E(T). 
\end{equation}
Moreover, if $(\Gamma(-), X)$ satisfies the finiteness condition
\bigskip

\centerline{\hskip3cm
$(F_1)$ \qquad \mbox{$X$ is a finite graph,}\hfill}
\bigskip

or
\bigskip

\centerline{\hskip3cm $(F_2)$  \qquad 
\mbox{$\Gamma(v)$ is finite for every vertex $v\in V(X),$}\hfill}
\bigskip

then we may choose $(\Delta(-), Y)$ so as to enjoy the same property.
\end{lemma}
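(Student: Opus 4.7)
The plan is to choose a spanning tree and then iteratively \emph{collapse} those tree edges along which one of the boundary monomorphisms happens to be surjective, so that after finitely many collapses every remaining tree edge satisfies \eqref{eq:normal}. Since $V(X)$ is finite, any spanning tree $T\subseteq X$ has only finitely many edges, so only finitely many steps can occur.

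To make this precise, I define the collapse operation as follows. Suppose $e\in E(T)$ satisfies $\Gamma(e)^e=\Gamma(t(e))$ (the case $\Gamma(e)^{\bar e}=\Gamma(o(e))$ is symmetric). Because boundary maps are injective by assumption, the inclusion $\Gamma(e)\hookrightarrow\Gamma(t(e))$ is then an isomorphism. I form $(\Gamma'(-),X')$ by
deleting the geometric edge $\{e,\bar e\}$ from $X$, identifying the vertices $o(e)$ and $t(e)$ to a single new vertex $v^{*}$, declaring $\Gamma'(v^{*}):=\Gamma(o(e))$, and leaving all other vertex and edge groups unchanged; for each surviving edge $f$ previously incident at $t(e)$, its boundary map is post-composed with the injection $\Gamma(t(e))\xleftarrow{\sim}\Gamma(e)\hookrightarrow\Gamma(o(e))$. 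The image $T'$ of $T\setminus\{e,\bar e\}$ in $X'$ is a spanning tree of $X'$, since collapsing an edge of a tree yields a tree. After the operation $|V(X')|=|V(X)|-1$, the number of geometric edges drops by exactly one, and the orders of vertex groups are unchanged; hence conditions $(F_1)$ and $(F_2)$ are preserved. Iterating, the process stops after at most $|V(X)|-1$ steps, at which point no tree edge violates \eqref{eq:normal}.

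The key technical point\,---\,and the only real obstacle\,---\,is the verification that $\pi_1(\Gamma'(-),X')\cong\pi_1(\Gamma(-),X)$. Working from Serre's presentation of the fundamental group relative to the tree $T$ (\cite[\S5.1]{Serre2}), the edge $e\in T$ contributes no stable generator, and the amalgamation relations it imposes force the inclusion $\Gamma(e)^{\bar e}\subseteq\Gamma(o(e))$ to equal the inclusion $\Gamma(e)^{e}\subseteq\Gamma(t(e))$; since the latter is all of $\Gamma(t(e))$, every element of $\Gamma(t(e))$ is already expressible inside $\Gamma(o(e))$ via the composite $\Gamma(t(e))\xleftarrow{\sim}\Gamma(e)\hookrightarrow\Gamma(o(e))$. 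A routine Tietze-transformation argument then identifies the presentation of $\pi_1(\Gamma(-),X)$ with that of $\pi_1(\Gamma'(-),X')$ built from the spanning tree $T'$; equivalently, this is the Bass--Serre \emph{elementary collapse} of a trivially amalgamated tree edge, which is well known to preserve the fundamental group. Combining this invariance with the termination and finiteness observations above yields the desired graph of groups $(\Delta(-),Y):=(\Gamma^{(n)}(-),X^{(n)})$, together with the spanning tree produced at the final stage.
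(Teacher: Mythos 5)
Your proposal is correct and follows essentially the same route as the paper's own proof: choose a spanning tree, contract a trivially amalgamated tree edge into the other endpoint while re-routing the affected boundary monomorphisms through the composite $\Gamma(t(e))\xrightarrow{\sim}\Gamma(e)\hookrightarrow\Gamma(o(e))$, verify invariance of the fundamental group via Tietze moves on Serre's presentation relative to the spanning tree, and iterate, using finiteness of the tree for termination. The preservation of $(F_1)$ and $(F_2)$ is argued exactly as in the paper, so there is nothing to add.
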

\begin{proof}
Choose a 
spanning tree $S$ in $X$, and call an edge $e\in E(S)$
\textit{trivial}, if at least one of the associated embeddings $e:
\Gamma(e) \rightarrow \Gamma(t(e))$ and $\bar{e}: \Gamma(e)
\rightarrow \Gamma(o(e))$ is an isomorphism. If $S$ contains a trivial
edge $e_1$ 
--- to fix ideas, say $\Gamma(e_1)^{e_1} = \Gamma(t(e_1))$ ---
then we contract the edge $e_1$ into the vertex $o(e_1)$ and re-define
incidence and embeddings where necessary, to obtain a new graph of
groups $(\Gamma'(-), X')$ with spanning tree $S'$ in $X'$. More
precisely, this means that we let  
\begin{align*}
E(X') &= E(X) \setminus \{e_1, \bar{e}_1\},\\[1mm]
E(S') &= E(S) \setminus \{e_1, \bar{e}_1\},\\[1mm]
V(X') &= V(S') = V(X)\setminus \{t(e_1)\},
\end{align*}
set 
\[
t'(e):= o(e_1), \quad \text {for }e\in E(X')\text { with }t(e) = t(e_1),
\] 
and define new embeddings via
\begin{equation}
\label{Eq:ImbedExt}
\Gamma(e) \overset{e}{\longrightarrow} \Gamma(t(e_1))
\overset{e_1^{-1}}{\longrightarrow} \Gamma(e_1)
\overset{\bar{e}_1}{\longrightarrow} \Gamma(o(e_1)) = \Gamma(t'(e)),
\quad \text {for }e\in E(X')\text { with } t(e)=t(e_1), 
\end{equation}
leaving incidence and embeddings unchanged wherever possible. Clearly,
$S'$, the result of contracting the geometric edge $\{e_1,
\bar{e}_1\}$ and deleting the vertex $t(e_1)$, is still a spanning
tree  for $X'$ and, if $(\Gamma(-),X)$ has property $(F_1)$ or
$(F_2)$, then so does $(\Gamma'(-), X')$ by construction. 

It remains to see that the fundamental group of the new graph of
groups $(\Gamma'(-), X')$ is isomorphic to $\Gamma$. The fundamental
group  
\[
\pi_1(\Gamma(-),X,S) 
\]
of the graph of groups $(\Gamma(-),X)$ at the 
spanning tree $S$ is
generated by the groups $\Gamma(v)$ for $v\in V(X)$ plus extra
generators $\gamma_e$ for $e\in \mathcal{O}(X)-E(S)$, where
$\mathcal{O}(X)$ is any orientation of $X$, subject to the relations 
\begin{align}
a^e &= a^{\bar{e}},\quad \text {for }e\in \mathcal{O}(S)\text { and } a\in
\Gamma(e),\label{Eq:GammaPres1}\\[1mm] 
\gamma_e a^e \gamma_e^{-1} &= a^{\bar{e}},\quad \text {for }e\in
\mathcal{O}(X)-E(S)\text { and } a\in \Gamma(e),\label{Eq:GammaPres2} 
\end{align}
where $\mathcal{O}(S)$ is the orientation of the tree $S$ induced by
$\mathcal{O}(X)$, with a corresponding presentation for
$\pi_1(\Gamma'(-),X',S')$;  
see \S5.1 in \cite[Chap.~I]{Serre2}.  The
relations~\eqref{Eq:GammaPres1} corresponding to the geometric edge
$\{e_1, \bar{e}_1\}$ identify $\Gamma(t(e_1))$ isomorphically with a
subgroup of $\Gamma(o(e_1))$; we can thus delete the generators
$\gamma\in \Gamma(t(e_1))$ against those relations by Tietze
moves. This yields a presentation for $\pi_1(\Gamma(-), X,S)$ with the
same set of generators as $\pi_1(\Gamma'(-), X',S')$. Moreover, those
relations \eqref{Eq:GammaPres1}--\eqref{Eq:GammaPres2} coming from
edges~$e$ with $t(e)=t(e_1)$ have to be re-expressed in terms of
elements of $\Gamma(o(e_1))$, which leads exactly to the corresponding
relations of $\pi_1(\Gamma'(-), X',S')$ obtained by extending the
embedding $e: \Gamma(e) \rightarrow \Gamma(t(e_1))$ in the natural way
as given in \eqref{Eq:ImbedExt}. Hence, $\pi_1(\Gamma(-), X,S) \cong
\pi_1(\Gamma'(-), X', S')$. Since $V(X)$ is finite, the tree $S$ is
finite; thus, proceeding in the manner described, we obtain, after
finitely many steps, a graph of groups $(\Delta(-), Y)$ with
fundamental group $\Gamma$ and a spanning tree $T$ in $Y$ without
trivial edges, such that $(\Delta(-),Y)$ enjoys the finiteness
properties $(F_1), (F_2)$ whenever $(\Gamma(-),X)$ does.  
\end{proof}

\section{A graph-theoretic lemma}
\label{Sec:TreeOrient}

 The following auxiliary result, which is of an entirely 
graph-theoretic nature, will be used frequently in 
the rest of the paper.

\begin{lemma}
\label{Lem:TreeOrient}
Let $T$ be a tree, and let $v_0\in V(T)$ be any vertex. Then there
exists one, and only one, orientation $\mathcal{O}(T)$ of $T,$ such
that the assignment $e\mapsto t(e)$ defines a bijection $\psi_{v_0}:
\mathcal{O}(T) \rightarrow V(T)\setminus\{v_0\}$. This orientation is
obtained by orienting each geometric edge so as to point away from the
root $v_0;$ that is, travelling along an edge of $\mathcal{O}(X),$ the distance
from $v_0$ in the path metric always  increases.  
\end{lemma}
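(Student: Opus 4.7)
My approach is to prove existence by an explicit construction based on paths from $v_0$, and then to establish uniqueness by a short induction along such paths.

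For existence, since $T$ is a tree, between any two vertices there is a unique simple path. For each $v\in V(T)\setminus\{v_0\}$, I would let $e_v$ denote the last edge traversed on the path from $v_0$ to $v$, oriented so that $t(e_v)=v$, and set $\mathcal{O}(T):=\{e_v:v\in V(T)\setminus\{v_0\}\}$. To verify this is an orientation, I would fix a geometric edge $\{e,\bar e\}$ and use the fact that removing it disconnects $T$ into exactly two components, one containing $v_0$ (and, say, $o(e)$) and the other containing $t(e)$. The path from $v_0$ to $t(e)$ must cross this edge exactly once, as its last edge, so $e_{t(e)}=e$; on the other hand, the path from $v_0$ to $o(e)$ does not use this edge at all, so $\bar e\notin\mathcal{O}(T)$. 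Hence exactly one of $e,\bar e$ lies in $\mathcal{O}(T)$. The bijectivity of $\psi_{v_0}$ is immediate, since $\psi_{v_0}(e_v)=v$.

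For uniqueness, let $\mathcal{O}$ be any orientation for which $\psi_{v_0}$ is a bijection onto $V(T)\setminus\{v_0\}$. I would show, by induction along each path issuing from $v_0$, that every geometric edge of $T$ is necessarily oriented away from $v_0$. Given $v\neq v_0$, write the unique path from $v_0$ to $v$ as $v_0=u_0,u_1,\dots,u_k=v$, and denote by $\epsilon_i$ the edge with $o(\epsilon_i)=u_{i-1}$, $t(\epsilon_i)=u_i$. For $i=1$, the orientation must pick $\epsilon_1$ rather than $\bar\epsilon_1$, since otherwise $v_0$ would lie in the image of $\psi_{v_0}$. For $i\geq 2$, assuming $\epsilon_{i-1}\in\mathcal{O}$, choosing $\bar\epsilon_i$ instead of $\epsilon_i$ would give $t(\bar\epsilon_i)=u_{i-1}=t(\epsilon_{i-1})$; as the path is simple, $\bar\epsilon_i\neq\epsilon_{i-1}$, contradicting injectivity of $\psi_{v_0}$. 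Thus $\epsilon_i\in\mathcal{O}$ throughout, and since every geometric edge of $T$ appears on the path from $v_0$ to its farther endpoint, $\mathcal{O}$ coincides with the orientation constructed above.

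Neither step presents a serious obstacle: the only slightly delicate point is keeping the orientations of the edges $\epsilon_{i-1}$ and $\bar\epsilon_i$ straight in the induction, and here the simplicity of the path (which follows from $T$ being a tree) is exactly what rules out $\epsilon_{i-1}=\bar\epsilon_i$. No finiteness of $V(T)$ is used, since every relevant path is finite. The final remark of the lemma—that $\mathcal{O}(T)$ is the orientation pointing away from $v_0$ in the path metric—is evident from the construction, as travelling along $e_v$ from $o(e_v)$ to $t(e_v)=v$ increases the distance from $v_0$ by~$1$.
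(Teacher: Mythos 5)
Your proof is correct, and it takes a different (and slightly more general) route than the paper. The paper proves the lemma by induction on $\vert V(T)\vert$ for finite trees: one observes that every edge incident with $v_0$ must be oriented away from $v_0$, deletes $v_0$ together with these edges, and applies the induction hypothesis to the resulting subtrees rooted at the former neighbours of $v_0$. You instead give a global construction --- taking $e_v$ to be the last edge of the unique simple path from $v_0$ to $v$ --- verify via the two-component decomposition of $T\setminus\{e,\bar e\}$ that this yields an orientation, and then prove uniqueness by propagating the forced choice along each path issuing from $v_0$. Your key observations (the base case at $v_0$ forced by surjectivity onto $V(T)\setminus\{v_0\}$, and the inductive step forced by injectivity together with $\bar\epsilon_i\neq\epsilon_{i-1}$ on a simple path) are sound. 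What your approach buys is exactly the generality the paper asserts but does not prove: it requires no finiteness of $V(T)$, since only individual finite paths are used, whereas the paper's deletion-of-the-root induction is stated only for finite trees (which is all the paper needs). The paper's induction is marginally shorter to state; yours is self-contained at the stated level of generality.
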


Lemma~\ref{Lem:TreeOrient} is easy to show, even in this
generality. Moreover, for our present purposes, the trees considered
will all be finite, in which case the assertion of
Lemma~\ref{Lem:TreeOrient} may be proved by a straightforward
induction on $\vert V(T)\vert$, which we sketch briefly: by our
condition on the map $\psi_{v_0}$, all (geometric) edges incident with
$v_0$ will have to be oriented away from the root $v_0$.  Delete $v_0$
together with edges incident to $v_0$. The result is a disjoint union
of finitely many subtrees, in which we choose the (previous)
neighbours of $v_0$ as new roots. An application of the induction
hypothesis to these rooted subtrees now finishes the proof.  

\medskip
 In what follows, the orientation of a tree $T$ with respect to a base
point $v_0$ described in Lemma~\ref{Lem:TreeOrient} will be denoted by
$\mathcal{O}_{v_0}(T)$. 

\section{An inequality for the number of edges of a graph of groups}
\label{Sec:Ineq}

 An important consequence of normalisation is the following.

\begin{lemma}
\label{Lem:EEst}
Let $(\Gamma(-), X, T)$ be a finite graph of finite groups with maximal tree $T\leq X$ and fundamental group $\pi_1(\Gamma(-), X) \cong \Gamma$. If $(\Gamma(-), X, T)$ satisfies the normalisation condition {\em (\ref{eq:normal}),} then the number of edges $\vert E(X)\vert$ of the graph $X$ is bounded above in terms of the free rank of\/ $\Gamma$ via
\begin{equation}
\label{Eq:EEst}
\vert E(X)\vert \leq 2 \mu(\Gamma).
\end{equation}
\end{lemma}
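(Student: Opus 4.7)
The plan is to translate the claim into an arithmetic inequality involving $m_\Gamma$ and the orders of the vertex and edge groups, and then to exploit Lemma~\ref{Lem:TreeOrient} together with the normalisation condition \eqref{eq:normal}. Combining \eqref{Eq:EulerCharDecomp} with \eqref{Eq:FreeEuler} gives
\[
\mu(\Gamma) \,=\, 1 \,-\, \sum_{v\in V(X)} n_v \,+\, \sum_{e\in \mathcal{O}(X)} n_e,
\]
where $n_v := m_\Gamma/|\Gamma(v)|$ and $n_e := m_\Gamma/|\Gamma(e)|$ are positive integers by Lagrange. Since $|E(X)| = 2|\mathcal{O}(X)|$, the bound \eqref{Eq:EEst} is equivalent to $|\mathcal{O}(X)| \leq \mu(\Gamma)$, which after rearrangement amounts to
\[
\sum_{v\in V(X)} n_v \,-\, 1 \,\leq\, \sum_{e\in \mathcal{O}(X)} (n_e - 1).
\]

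Next, I would pick a vertex $v_0 \in V(X)$ and apply Lemma~\ref{Lem:TreeOrient} to $T$, obtaining the orientation $\mathcal{O}_{v_0}(T)$ under which $e \mapsto t(e)$ is a bijection $\mathcal{O}_{v_0}(T) \to V(X) \setminus \{v_0\}$. Extend $\mathcal{O}_{v_0}(T)$ to an orientation $\mathcal{O}(X)$ of $X$ arbitrarily. For each tree edge $e \in \mathcal{O}_{v_0}(T)$, the normalisation condition \eqref{eq:normal} tells us that $\Gamma(e)^e$ is a proper subgroup of $\Gamma(t(e))$, so $|\Gamma(t(e))| \geq 2|\Gamma(e)|$ by Lagrange, and hence $n_e \geq 2 n_{t(e)}$. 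Discarding the (non-negative) contributions of the non-tree edges and invoking the bijection of Lemma~\ref{Lem:TreeOrient} yields
\[
\sum_{e\in \mathcal{O}(X)} (n_e - 1) \,\geq\, \sum_{e\in \mathcal{O}_{v_0}(T)} (2 n_{t(e)} - 1) \,=\, 2 \sum_{v \neq v_0} n_v \,-\, (|V(X)| - 1).
\]

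To conclude I would choose $v_0$ so as to minimise $n_{v_0}$ (equivalently, to maximise $|\Gamma(v_0)|$). Substituting the lower bound just obtained into the reduced form of \eqref{Eq:EEst} and rearranging, it suffices to verify
\[
\sum_{v \neq v_0} n_v \,\geq\, n_{v_0} + |V(X)| - 2.
\]
Since $n_v \geq n_{v_0}$ for every $v$, the left-hand side is at least $(|V(X)|-1)\, n_{v_0}$, so the required estimate reduces to $(|V(X)|-2)(n_{v_0}-1) \geq 0$, which is immediate for $|V(X)| \geq 2$; in the degenerate case $|V(X)| = 1$ one has $m_\Gamma = |\Gamma(v_0)|$, forcing $n_{v_0} = 1$ and making the original inequality trivial. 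I expect the main subtlety to lie in this last step: since \eqref{eq:normal} only constrains tree edges, the root $v_0$ has to be aligned with a vertex of largest group order so that the tree-edge contribution alone is able to absorb the full vertex sum on the left.
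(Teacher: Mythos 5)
Your argument is correct, and its skeleton coincides with the paper's: both reduce \eqref{Eq:EEst} to $|\mathcal{O}(X)|\le\mu(\Gamma)=1-m_\Gamma\chi(\Gamma)$ via \eqref{Eq:EulerCharDecomp} and \eqref{Eq:FreeEuler}, use the bijection $e\mapsto t(e)$ of Lemma~\ref{Lem:TreeOrient} to pair each non-root vertex with the tree edge pointing into it, extract a factor of $2$ from the normalisation condition \eqref{eq:normal} on each such pair, and note that the non-tree edges contribute with the correct sign for free. The one genuine divergence is the treatment of the leftover root vertex. The paper roots $T$ at the origin of a designated tree edge $e_1$ and invokes \eqref{eq:normal} at \emph{both} ends of $e_1$, giving $\frac{1}{|\Gamma(o(e_1))|}+\frac{1}{|\Gamma(t(e_1))|}\le\frac{1}{|\Gamma(e_1)|}$, so that this single edge pays for two vertices at once (the case $|V(X)|=1$ being handled separately). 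You instead root at a vertex of maximal group order and absorb $n_{v_0}$ by redistributing the surplus from the factor-of-two gains at the remaining vertices, which comes down to $(|V(X)|-2)(n_{v_0}-1)\ge 0$; this uses only the ``terminus'' half of \eqref{eq:normal}, but, as you correctly flag, it genuinely requires that choice of root --- for an arbitrary $v_0$ the needed inequality $\sum_{v\ne v_0}n_v\ge n_{v_0}+|V(X)|-2$ can fail (e.g.\ two vertices with $n_{v_0}=3$ and $n_{v_1}=1$). Both devices close the argument; yours has the minor advantage that the one-vertex case becomes degenerate rather than a separate case.
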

\begin{proof}
We distinguish two cases.
\medskip

 (a) $\vert V(X)\vert = 1$. Then $m_\Gamma = \vert\Gamma(v)\vert$, where $V(X) = \{v\}$, and the Euler characteristic of $\Gamma$ becomes
\begin{align*}
\chi(\Gamma) &= \frac{1}{\vert\Gamma(v)\vert}\,-\, \sum_{e\in\mathcal{O}(X)} \frac{1}{\vert \Gamma(e)\vert}\\[1mm]
&= m_\Gamma^{-1} \bigg(1 - \sum_{e\in\mathcal{O}(X)} \big(\Gamma(v) : \Gamma(e)^e\big)\bigg)\\[1mm]
&\leq - m_\Gamma^{-1} \big(\vert\mathcal{O}(X)\vert - 1\big),
\end{align*}
where $\mathcal{O}(X)$ is an arbitrary orientation of $X$. It follows that
\[
\mu(\Gamma) = 1 - m_\Gamma\chi(\Gamma) \geq \vert \mathcal{O}(X)\vert,
\]
whence our claim in this case.
\medskip

 (b) $\vert V(X)\vert \geq2$. Then $E(T) \neq \emptyset$, and we may choose some edge $e_1\in E(T)$. Consider the tree $T$ as rooted with root $v_1 = o(e_1)$ and associated orientation $\mathcal{O}_{v_1}(T)$ in the sense of Lemma~\ref{Lem:TreeOrient}. Extending $\mathcal{O}_{v_1}(T)$ to an orientation $\mathcal{O}(X)$ of $X$, we write
\begin{multline*}
\chi(\Gamma) = \Big(\frac{1}{\vert\Gamma(o(e_1))\vert} + \frac{1}{\vert\Gamma(t(e_1))\vert} - \frac{1}{\vert \Gamma(e_1)\vert}\Big)\\[1mm] + \sum_{e\in\mathcal{O}_{v_1}(T)\setminus\{e_1\}}\Big(\frac{1}{\vert\Gamma(t(e))\vert} 
- \frac{1}{\vert\Gamma(e)\vert}\Big) 
- \sum_{e\in\mathcal{O}(X)\setminus\mathcal{O}_{v_1}(T)} \frac{1}{\vert\Gamma(e)\vert}.
\end{multline*}
Since the edge $e_1$ is not trivial, we have $2\vert\Gamma(e_1)\vert \leq \vert\Gamma(o(e_1))\vert$ as well as $2\vert\Gamma(e_1)\vert \leq \vert \Gamma(t(e_1))\vert$, thus
\[
\frac{1}{\vert\Gamma(o(e_1))\vert} + \frac{1}{\vert\Gamma(t(e_1))\vert}\, \leq\, \frac{1}{\vert\Gamma(e_1)\vert}. 
\]
For the same reason, for $e\in \mathcal{O}_{v_1}(T)\setminus\{e_1\}$, 
we have
\[
\frac{1}{\vert\Gamma(t(e))\vert} - \frac{1}{\vert\Gamma(e)\vert} = \frac{1-(\Gamma(t(e)) : \Gamma(e)^e)}{\vert\Gamma(t(e))\vert} \,\leq\, - \frac{1}{\vert\Gamma(t(e))\vert}\, \leq\, -\frac{1}{m_\Gamma}.
\]
Putting together these observations, we find that
\[
\chi(\Gamma) \leq -m_\Gamma^{-1}\big(\vert\mathcal{O}_{v_1}(T)\vert - 1\big) - m_\Gamma^{-1} \vert\mathcal{O}(X)\setminus\mathcal{O}_{v_1}(T)\vert = -m_\Gamma^{-1}\big(\vert\mathcal{O}(X)\vert - 1\big),
\]
from which our claim follows as before.
\end{proof}

\section{Classifying virtually infinite-cyclic groups}
\label{Sec:Rank1}
 Virtually infinite-cyclic groups play a
certain role in topology as they are precisely the finitely generated
groups with two ends. Their structure is well-known;
cf.\ \cite[5.1]{Stallings} or \cite[Lemma~4.1]{Wall}. In
this section, we shall give a short proof of the
corresponding result (Proposition~\ref{Prop:InfCyc}) based on the 
tools developed in 
Sections~\ref{Sec:GoGNorm} and \ref{Sec:TreeOrient}. As a
consequence of this classification result, we find that the function $f_\lambda(\Gamma)$ is constant for $\mu(\Gamma) = 1$; 
cf.\ Corollary~\ref{Cor:mu=1}.

\begin{proposition}
\label{Prop:InfCyc}
A virtually infinite-cyclic group $\Gamma$ falls into one of the
following two classes: 
\begin{enumerate}
\item[(i)] $\Gamma$ has a finite normal subgroup with infinite-cyclic quotient.
\vspace{2mm}

\item[(ii)] $\Gamma$ is a free product $\Gamma = G_1
\underset{A}{\ast} G_2$ of two finite groups $G_1$ and $G_2$, with an
amalgamated subgroup $A$ of index $2$ in both factors.  
\end{enumerate}
\end{proposition}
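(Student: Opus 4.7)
My approach is to realise $\Gamma$ as the fundamental group of a normalised finite graph of finite groups, apply the bound of Lemma~\ref{Lem:EEst}, and then work out the two remaining configurations directly. A virtually infinite cyclic group is finitely generated virtually free (since $\Z$ is itself free of rank $1$), so $\Gamma$ admits a Stallings decomposition $\Gamma \cong \pi_1(\Gamma(-), X)$ with $X$ a finite connected graph and all vertex groups finite. Applying \eqref{Eq:EulerChar} to a finite-index subgroup $\mathfrak{F} \cong \Z$ gives $\chi(\Gamma) = 0$, whence \eqref{Eq:FreeEuler} yields $\mu(\Gamma) = 1$. I now invoke Lemma~\ref{Lem:Normalise} to assume the decomposition $(\Gamma(-), X, T)$ is normalised; Lemma~\ref{Lem:EEst} then forces $|E(X)| \leq 2$, i.e.\ $|\mathcal{O}(X)| \leq 1$. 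If $|\mathcal{O}(X)| = 0$ then $\Gamma = \Gamma(v)$ is finite, contradicting infiniteness, so there is exactly one geometric edge.

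Case (a): $|V(X)| = 1$, so $X$ is a single vertex $v$ carrying a loop $e$. The identity $\chi(\Gamma) = |\Gamma(v)|^{-1} - |\Gamma(e)|^{-1} = 0$ forces $|\Gamma(e)| = |\Gamma(v)|$, so both edge embeddings $\Gamma(e) \hookrightarrow \Gamma(v)$ are isomorphisms. The resulting HNN presentation then exhibits $\Gamma$ as a semidirect product $\Gamma(v) \rtimes \Z$, so $\Gamma(v)$ is a finite normal subgroup with infinite cyclic quotient; this is class (i).

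Case (b): $|V(X)| = 2$, so $X$ consists of two vertices $v_1, v_2$ joined by a single edge $e$, which necessarily lies in $T$. The normalisation condition \eqref{eq:normal} yields $2|\Gamma(e)| \leq |\Gamma(v_i)|$ for $i = 1, 2$, so
\[
0 \;=\; \chi(\Gamma) \;=\; \frac{1}{|\Gamma(v_1)|} + \frac{1}{|\Gamma(v_2)|} - \frac{1}{|\Gamma(e)|} \;\leq\; 0,
\]
with equality if and only if $[\Gamma(v_i) : \Gamma(e)] = 2$ for both $i$. This is precisely class (ii).

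The argument is essentially mechanical once the set-up is in place; the only mildly delicate point is case (a), where the normalisation condition \eqref{eq:normal} is vacuous on loops, so one must appeal to the vanishing of $\chi(\Gamma)$ rather than to normalisation in order to conclude that the two loop embeddings are isomorphisms.
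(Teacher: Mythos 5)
Your proof is correct, and it takes a mildly but genuinely different route from the paper's. You invoke Lemma~\ref{Lem:EEst} to conclude at once that $X$ has at most one geometric edge (hence, by connectivity, at most two vertices), after which only two short computations with $\chi(\Gamma)=0$ are needed to settle the loop and the segment configurations. The paper does not cite Lemma~\ref{Lem:EEst} here; instead it reruns the underlying estimate inline, splitting $\chi(\Gamma)$ into the contribution of a chosen tree edge $e_1$, the remaining tree edges paired with their termini via Lemma~\ref{Lem:TreeOrient}, and the non-tree edges, showing each block is $\leq 0$ and then reading off the structure from the case of equality. The two arguments rest on the same engine---normalisation plus the rooted-tree orientation---since the proof of Lemma~\ref{Lem:EEst} is itself exactly that splitting; your version packages the inequality once and reuses it, which is precisely how the paper later organises the proof of Proposition~\ref{Prop:mu=2}, so your proof is arguably the more uniform one. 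What the inline version buys is that the equality analysis simultaneously yields that the tree has a single edge, that there are no non-tree edges, and that both indices equal $2$, without enumerating graph shapes. Your closing remark is well taken and matches the paper: condition~\eqref{eq:normal} constrains only edges of $T$, so it says nothing about the loop in case (a), and there one must indeed fall back on $\chi(\Gamma)=0$ to see that both loop embeddings are isomorphisms; in case (b) the unique geometric edge necessarily constitutes the spanning tree, so~\eqref{eq:normal} does apply to it, as you use.
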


\begin{proof}
Let $(\Gamma(-),X)$ be a finite graph of finite groups with
fundamental group $\Gamma$ and spanning tree $T$, chosen according to
Lemma~\ref{Lem:Normalise}. The reader should observe that the
assumption that $\Gamma$ is 
virtually infinite-cyclic in combination with \eqref{Eq:FreeEuler} implies that
$\chi(\Gamma)=0$.  

 If $\vert V(X)\vert = 1$, $V(X)=\{v\}$ say, then
the above observation together with Formula~\eqref{Eq:EulerCharDecomp} 
shows that $X$ has exactly one
geometric edge $\{e, \bar{e}\}$, and that the associated embeddings $e, \bar{e}:
\Gamma(e)\rightarrow \Gamma(v)$ are isomorphisms. Hence, $\Gamma(v)
\unlhd \Gamma$ and $\Gamma/\Gamma(v) \cong C_\infty$, which gives the
desired result in Case~(i). 

 If $\vert V(X)\vert >1$, we choose an edge $e_1\in E(T)$, introduce
the orientation $\mathcal{O}_{v_0}(T)$ with respect to the base point
$v_0=o(e_1)$, extend it to an orientation $\mathcal{O}(X)$ of $X$,
and let $v_1=t(e_1)$. We then split the Euler characteristic of $\Gamma$ 
as follows: 
\begin{multline}
\label{Eq:chiSplit}
0 = \chi(\Gamma)  
= \underset{v\neq v_0, v_1}{\sum_{v\in V(X)}} \frac{1}{\vert
\Gamma(v)\vert}\,-\,\underset{e\neq
e_1}{\sum_{e\in\mathcal{O}_{v_0}(T)}} \frac{1}{\vert
\Gamma(e)\vert}\,\,+\,\,\Big(\frac{1}{\vert \Gamma(v_0)\vert} +
\frac{1}{\vert \Gamma(v_1)\vert} - \frac{1}{\vert
\Gamma(e_1)\vert}\Big) \\[1mm] 
-\sum_{e\in\mathcal{O}(X)\setminus \mathcal{O}_{v_0}(T)}
\frac{1}{\vert\Gamma(e)\vert}. 
\end{multline}
By the normalisation condition 
\eqref{eq:normal} 
on $(\Gamma(-), X, T)$, we have 
\[
2 \vert \Gamma(e_1)\vert\, \leq \gamma:=
\min\big\{\vert\Gamma(v_0)\vert, \,\Gamma(v_1)\vert\big\}, 
\]
so
\begin{equation}
\label{Eq:chi=0First}
\frac{1}{\vert \Gamma(v_0)\vert}\, +\,
\frac{1}{\vert\Gamma(v_1)\vert}\, -\, \frac{1}{\vert
\Gamma(e_1)\vert}\, \leq\, \frac{2}{\gamma}\, -\, \frac{1}{\vert
\Gamma(e_1)\vert}\, \leq\, 0. 
\end{equation}
Clearly, equality in \eqref{Eq:chi=0First} occurs if, and only if,
$\Gamma(e_1)$ is of index $2$ in both $\Gamma(v_0)$ and
$\Gamma(v_1)$. 
Similarly, by the normalisation condition \eqref{eq:normal}
and Lemma~\ref{Lem:TreeOrient}, we have
\[
\underset{v\neq v_0, v_1}{\sum_{v\in V(X)}} \frac{1}{\vert
\Gamma(v)\vert}\,-\,\underset{e\neq
e_1}{\sum_{e\in\mathcal{O}_{v_0}(T)}} \frac{1}{\vert
\Gamma(e)\vert}\,=\, \underset{e\neq
e_1}{\sum_{e\in\mathcal{O}_{v_0}(T)}}
\Big(\frac{1}{\vert\Gamma(t(e))\vert} -
\frac{1}{\vert\Gamma(e)\vert}\Big) \leq 0, 
\]
with equality if, and only if, $\mathcal{O}_{v_0}(T) = \{e_1\}$. Also,
trivially, the last sum on the right-hand side of \eqref{Eq:chiSplit}
is non-negative, and vanishes if, and only if,  
 $\mathcal{O}(X)=\mathcal{O}_{v_0}(T)$. Given this discussion, we
conclude from \eqref{Eq:chiSplit} that $\Gamma = \Gamma(v_0)
\underset{\Gamma(e_1)}{\ast} \Gamma(v_1)$, the amalgam being formed
with respect to the embeddings $e_1: \Gamma(e_1) \rightarrow
\Gamma(v_1)$ and $\bar{e}_1: \Gamma(e_1)\rightarrow \Gamma(v_0)$, and
that $(\Gamma(v_0):\Gamma(e_1)^{\bar{e}_1}) = 2 =
(\Gamma(v_1):\Gamma(e_1)^{e_1})$, whence the result in Case~(ii).  
\end{proof}

\begin{remarks}
{\em 1. In Case~(i) of Proposition~\ref{Prop:InfCyc}, we have $\zeta_\kappa=0$ for all $\kappa\mid m_\Gamma$ 
whereas, in Case~(ii), $\zeta_{m_\Gamma}=-1$. Hence, groups occurring in Case~(i) are not isomorphic to groups belonging to Case~(ii).}\label{Rem:InfCyc2}

\vspace{2mm}

{\em  2. In Part~(ii) of Proposition~\ref{Prop:InfCyc}, $A$ is a
finite normal subgroup of $\Gamma$ with quotient $C_2\ast C_2$, the infinite dihedral group.} \label{Rem:InfCyc1}
\end{remarks}

\begin{corollary}
\label{Cor:mu=1}
If\/ $\Gamma$ is  virtually infinite-cyclic, then the function
$f_\lambda(\Gamma)$ is constant. More precisely, we have
$f_\lambda(\Gamma)=m_\Gamma$ for $\lambda\geq1$ in   Case~{\em (i)} of
Proposition~{\em \ref{Prop:InfCyc},} while in Case~{\em (ii)} we have
$f_\lambda(\Gamma) = \vert A\vert = m_\Gamma/2$. 
\end{corollary}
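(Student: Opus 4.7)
My plan is to compute $g_\lambda(\Gamma)$ explicitly in each case using formula~\eqref{Eq:gExplicit}, read off the generating function $G_\Gamma(z)$ in closed form, and then recover $F_\Gamma(z)$ via the logarithmic derivative identity \eqref{Eq:GFTransform}. The work in each case is essentially a one-line manipulation.

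In Case (i), Proposition~\ref{Prop:InfCyc} tells us that $\Gamma$ is presented by a graph of groups with a single vertex $v$ and a single geometric edge $\{e,\bar e\}$, with $\Gamma(e)\to\Gamma(v)$ an isomorphism on both sides; hence $|\Gamma(v)|=|\Gamma(e)|=m_\Gamma$. Plugging this into \eqref{Eq:gExplicit}, the numerator and denominator cancel completely and we get $g_\lambda(\Gamma)=1$ for every $\lambda\geq0$. Consequently $G_\Gamma(z)=1/(1-z)$, so that $\log G_\Gamma(z)=-\log(1-z)$, and \eqref{Eq:GFTransform} yields $F_\Gamma(z)=m_\Gamma/(1-z)$, i.e., $f_\lambda(\Gamma)=m_\Gamma$ for all $\lambda\geq1$.

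In Case (ii), the graph of groups has two vertices $v_1,v_2$ with $|\Gamma(v_1)|=|\Gamma(v_2)|=2|A|$ and one geometric edge with edge group of order $|A|$; thus $m_\Gamma=2|A|$. Formula~\eqref{Eq:gExplicit} now gives
\[
g_\lambda(\Gamma)=\frac{(2\lambda)!\,(m_\Gamma/2)^{2\lambda}}{(\lambda!)^2\,m_\Gamma^{2\lambda}}=\binom{2\lambda}{\lambda}4^{-\lambda}.
\]
Using the classical identity $\sum_{\lambda\geq0}\binom{2\lambda}{\lambda}x^\lambda=(1-4x)^{-1/2}$ with $x=z/4$, we obtain $G_\Gamma(z)=(1-z)^{-1/2}$, whence $\log G_\Gamma(z)=-\tfrac12\log(1-z)$ and \eqref{Eq:GFTransform} gives $F_\Gamma(z)=(m_\Gamma/2)(1-z)^{-1}$. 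Thus $f_\lambda(\Gamma)=m_\Gamma/2=|A|$ for all $\lambda\geq1$.

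There is essentially no obstacle here: the normalisation of Lemma~\ref{Lem:Normalise} together with Proposition~\ref{Prop:InfCyc} has already pinned down the graph of groups up to a single choice between two extremely simple presentations, so the only thing to do is to evaluate~\eqref{Eq:gExplicit}, recognise the resulting generating function, and differentiate the logarithm. The only place where one has to be slightly careful is remembering that the product in \eqref{Eq:gExplicit} runs over an orientation $\mathcal O(X)$, i.e., over geometric edges, so that in Case~(i) the loop contributes only one factor to both numerator and denominator.
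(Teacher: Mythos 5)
Your proposal is correct and follows essentially the same route as the paper: evaluate \eqref{Eq:gExplicit} in each of the two cases of Proposition~\ref{Prop:InfCyc}, obtaining $g_\lambda(\Gamma)=1$ in Case~(i) and $g_\lambda(\Gamma)=4^{-\lambda}\binom{2\lambda}{\lambda}$ in Case~(ii), and then pass to $F_\Gamma(z)$. The only (immaterial) difference is that in Case~(i) the paper deduces $f_\lambda(\Gamma)=m_\Gamma$ from the convolution \eqref{Eq:Transform} by induction rather than via $G_\Gamma(z)=1/(1-z)$ and \eqref{Eq:GFTransform} as you do.
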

\begin{proof}
If $\Gamma$ is as described in Case~(i) of
Proposition~\ref{Prop:InfCyc}, then \eqref{Eq:gExplicit} shows that
$g_\lambda(\Gamma)=1$ for $\lambda\geq0$, leading to
$f_\lambda(\Gamma)=m_\Gamma$ for all $\lambda\geq	1$ by
\eqref{Eq:Transform} and an immediate induction on $\lambda$. 

For $\Gamma$ as in Case~(ii), Equation~\eqref{Eq:gExplicit} yields 
\[
g_\lambda(\Gamma) = 2^{-2\lambda} \binom{2\lambda}{\lambda},\quad \lambda\geq0.
\]
By the binomial theorem applied to the generating function 
$G_\Gamma(z)$ of the $g_\lambda(\Gamma)$'s, we obtain 
$G_\Gamma(z)=(1-z)^{-1/2}$,
which transforms into
the relation
\[
F_\Gamma(z) 
= \frac{\vert m_\Gamma\vert}{2(1-z)}
= \frac{\vert A\vert}{1-z}
\]
via \eqref{Eq:GFTransform}. The desired result follows from this last
equation by  comparing coefficients. 
\end{proof}

\section{The case where $\mu(\Gamma)=2$}
\label{Sec:mu=2}
\subsection{The classification result}
\begin{proposition}
\label{Prop:mu=2}
A virtually free group $\Gamma$ of rank $\mu(\Gamma)=2$ falls into one
of the following five classes: 
\begin{enumerate}
\item[(i)] $\Gamma$ is an HNN-extension $\Gamma =
G\underset{A,\phi}{\ast}$ with finite base group $G,$ associated
subgroups $A$ and $B=\phi(A),$ associated isomorphism $\phi:
A\rightarrow B,$ and $(G:A)=2$.  
\vspace{2mm}

\item[(ii)] $\Gamma$ contains a finite normal subgroup $G$ with
quotient $\Gamma/G \cong F_2$ free of rank $2$. 
\vspace{2mm}

\item[(iii)] $\Gamma$ is a free product\/ $\Gamma = G_1
\underset{S}{\ast} G_2$ of two finite groups $G_i$ with an amalgamated
subgroup $S,$ whose indices $(G_i:S)$ satisfy one of the conditions 

\vspace{2mm}

\begin{enumerate}
\item[(iii)$_1$] $\{(G_1:S),\,(G_2:S)\} = \{2,3\},$
\vspace{1mm}

\item[(iii)$_2$] $(G_1:S) = 3 = (G_2:S),$
\vspace{1mm}

\item[(iii)$_3$] $\{(G_1:S),\, (G_2:S)\} = \{2, 4\}$.
\end{enumerate}

\vspace{2mm}

\item[(iv)] $\Gamma$ is a free product $\Gamma = G_1\underset{S}{\ast}
  \Gamma_2,$ where $G_1$ is finite, $\Gamma_2$ is a virtually
  infinite-cyclic group of type {\em (i)} {\em(}see Proposition~{\em
    \ref{Prop:InfCyc}}{\em)}, and $(G_1:S) = 2 = (G_2:S),$ where $G_2$ is
  the base group of the HNN-extension $\Gamma_2$. 

\vspace{2mm}
 
\item[(v)] $\Gamma$ is of the form $\Gamma = (G_1
\underset{S_1}{\ast} G_2) \underset{S_2}{\ast} G_3$ with finite
factors $G_1, G_2, G_3$ and subgroups $S_1, S_2$ 
satisfying $\vert G_1\vert = \vert G_2\vert = \vert G_3\vert = 2\vert
S_1\vert = 2\vert S_2\vert$. 
\end{enumerate}
\end{proposition}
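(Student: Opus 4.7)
\emph{Plan.} Following the same strategy as in the proof of Proposition~\ref{Prop:InfCyc}, I would begin by fixing a normalised Stallings decomposition $(\Gamma(-), X, T)$ of $\Gamma$ via Lemma~\ref{Lem:Normalise}. From $\mu(\Gamma)=2$ and \eqref{Eq:FreeEuler} we have $\chi(\Gamma) = -1/m_\Gamma$; meanwhile Lemma~\ref{Lem:EEst} gives $|\mathcal{O}(X)| \leq 2$, and since $|V(X)|-1 = |\mathcal{O}(T)| \leq |\mathcal{O}(X)|$ we also obtain $|V(X)| \leq 3$. Multiplying \eqref{Eq:EulerCharDecomp} by $-m_\Gamma$, the condition $\mu(\Gamma) = 2$ becomes the integer identity
\[
\sum_{e\in\mathcal{O}(X)} \frac{m_\Gamma}{|\Gamma(e)|} \,-\, \sum_{v\in V(X)} \frac{m_\Gamma}{|\Gamma(v)|} \,=\, 1,
\]
and the proof reduces to enumerating its solutions subject to the normalisation condition (every tree-edge has associated indices $\geq 2$) and to the constraint $m_\Gamma = \lcm\{|\Gamma(v)|:v\in V(X)\}$. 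I would then proceed by case distinction on $|V(X)|\in\{1,2,3\}$.

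When $|V(X)|=1$, all edges are loops and $m_\Gamma=|\Gamma(v)|$, so the identity collapses to $\sum_{e\in\mathcal{O}(X)}(\Gamma(v):\Gamma(e)^e)=2$, which leaves exactly two possibilities: one loop of index~$2$, placing $\Gamma$ in Case~(i); or two loops each with edge-group $\Gamma(v)$, in which case $\Gamma(v)$ is normal in $\Gamma$ with quotient $F_2$, placing $\Gamma$ in Case~(ii). When $|V(X)|=2$, let $e_1$ be the unique tree edge, and set $x=m_\Gamma/|\Gamma(o(e_1))|$, $y=m_\Gamma/|\Gamma(t(e_1))|$, with $p,q\geq 2$ the two endpoint-indices of $e_1$. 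If $X=T$, the equation becomes the Diophantine system $xp=yq$ and $xp-x-y=1$; a short enumeration using $\gcd(x,y)=1$ and $p,q\geq 2$ pins the solutions down to $\{p,q\}\in\{\{2,3\},\{3,3\},\{2,4\}\}$, which is precisely Case~(iii). If $X$ has one further edge, the equation forces its associated subgroup to equal the incident vertex-group: when this extra edge is a loop at a vertex $v_i$, then $v_i$ together with the stable letter generates a virtually infinite-cyclic subgroup of type~(i) in the sense of Proposition~\ref{Prop:InfCyc}, so $\Gamma$ falls into Case~(iv); when it is a second edge parallel to $e_1$, a short Tietze rewriting of the Bass--Serre presentation exhibits $\Gamma$ as an HNN extension of a single finite group with associated subgroups of index~$2$, placing $\Gamma$ in Case~(i). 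Finally, when $|V(X)|=3$, $X=T$ is necessarily a path with two edges, and the identity together with the four normalisation inequalities and the $\lcm$-condition on vertex-group orders forces uniquely $|\Gamma(v_0)|=|\Gamma(v_1)|=|\Gamma(v_2)|=2|\Gamma(e_1)|=2|\Gamma(e_2)|$, so that $\Gamma$ belongs to Case~(v).

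The main obstacle will be the Diophantine enumeration in the ``$|V(X)|=2$, $X=T$'' subcase, where the constraints $xp=yq$, $xp-x-y=1$, $\gcd(x,y)=1$, $p,q\geq 2$ must be shown to leave only the three index-pairs listed in Case~(iii); this is elementary but needs a careful upper bound on $p$ and $q$ followed by a handful of case checks. The remaining configurations reduce to short computations of the same flavour, and the recognition of the parallel-edge subcase as Case~(i) is a direct Tietze-move argument on the presentation inherited from $(\Gamma(-), X, T)$.
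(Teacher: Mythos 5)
Your proposal is correct and follows essentially the same route as the paper: normalise via Lemma~\ref{Lem:Normalise}, bound the number of geometric edges by Lemma~\ref{Lem:EEst}, turn $\mu(\Gamma)=2$ into the integer identity obtained from \eqref{Eq:EulerCharDecomp} and \eqref{Eq:FreeEuler}, and enumerate the admissible graphs case by case; your Diophantine system in the segment subcase ($xp=yq$, $xp-x-y=1$, $\gcd(x,y)=1$) is equivalent to the paper's equation $a_1a_2-a_1-a_2=\gcd(a_1,a_2)$ and yields the same three index pairs, and your treatments of the loop, segment-plus-loop and path configurations match the paper's computations. The one genuine difference is that your case analysis is \emph{more} complete than the paper's: a connected graph with at most two geometric edges can also consist of two parallel edges joining two vertices, a configuration that the paper's list of ``five possibilities'' for the isomorphism type of $X$ silently omits, and which is not excluded by normalisation since only the tree edge is forced to be non-trivial. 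Your handling of it is exactly what is needed to close that gap: the Euler characteristic identity together with the normalisation inequality on the tree edge forces the non-tree edge group to have order $m_\Gamma$, hence to map isomorphically onto both vertex groups, and a Tietze rewriting then exhibits $\Gamma$ as an HNN-extension of a single finite vertex group with both associated subgroups of index $2$, i.e.\ Case~(i). Apart from this welcome addition, the only caveat is that your write-up defers the elementary enumerations to ``short computations''; these are precisely the computations the paper carries out in full, and they do come out as you claim.
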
 

\begin{proof}
Let $\Gamma$ be a virtually free group of free rank $\mu(\Gamma)=2$,
let $(\Gamma(-), X)$ be a Stallings decomposition of $\Gamma$, and let
$T$ be a spanning tree in $X$ satisfying the normalisation condition
(\ref{eq:normal}) of Lemma~\ref{Lem:Normalise}. By
Lemma~\ref{Lem:EEst}, $X$ has at most
two geometric edges, while, by Equation (\ref{Eq:FreeEuler}), we have
$\chi(\Gamma) = - \frac{1}{m_\Gamma}$. There are five possibilities
for the isomorphism type of the graph $X$ underlying the decomposition
of $\Gamma$,  
and the proof of the proposition (as well as its statement) breaks
into cases accordingly. 

\vspace{2mm}

 (i) \textit{$X$ consists of a single loop $e$ with
  $o(e)=t(e)=v$.} Setting $G:= \Gamma(v)$ and $S:= \Gamma(e)$, we have
$m_\Gamma = \vert G\vert$ and 
\[
\chi(\Gamma) = \frac{1}{\vert G\vert} - \frac{1}{\vert S\vert} =
\frac{1 - (G:S^e)}{m_\Gamma} = -\frac{1}{m_\Gamma}, 
\]
implying $(G:S^e)=2$. Thus, setting $A:=S^e$, $B:=S^{\bar{e}}$, and
with the isomorphism $\phi: A \rightarrow B$ given by $x^e \mapsto
x^{\bar{e}}$ (in keeping with the notation of \cite[Chap.~IV.2]{LynSchu}),
the definition of $\pi_1(\Gamma(-), X)$ yields that  
\[
\Gamma \cong \big\langle G, t\,\big\vert \,tat^{-1} = \phi(a),\, a\in
A\big\rangle ,
\]
whence the result in that case.

\vspace{2mm}

 (ii) \textit{$X$ consists of a single vertex $v,$ supporting
  two loops $e_i$, $i=1,2$.} Set $G:= \Gamma(v)$ and $S_i:=
\Gamma(e_i)$. Then $m_\Gamma = \vert G\vert$, and 
\[
\chi(\Gamma) = \frac{1}{\vert G\vert} - \frac{1}{\vert S_1\vert} -
\frac{1}{\vert S_2\vert} = \frac{1 - (G:S_1^{e_1}) -
  (G:S_2^{e_2})}{m_\Gamma} = -\frac{1}{m_\Gamma}, 
\]
implying
\[
(G: S_1^{e_1}) = 1 = (G: S_2^{e_2}).
\]
Hence, the maps $e_i: S_i \rightarrow G$ are isomorphisms, and we
obtain the presentation  
\[
\Gamma \cong \big\langle G, s_1, s_2\,\big\vert\,s_1 a_1^{e_1}
s_1^{-1} = a_1^{\bar{e}_1}\, (a_1\in S_1),\, s_2 a_2^{e_2} s_2^{-1} =
a_2^{\bar{e}_2}\, (a_2\in S_2)\big\rangle. 
\]
It follows that the finite group $G$ is normal in $\Gamma$ with
quotient a free group of rank two, as claimed. 

\vspace{2mm}

 (iii) \textit{$X=T$ is a segment $e$ with vertices $v_1,
  v_2,$ say $t(e)=v_2$.} Set $G_i:= \Gamma(v_i), \,i=1,2,$ and $S:=
\Gamma(e)$. Then $\Gamma = G_1 \underset{S}{\ast} G_2$, with the
canonical embeddings given by $\bar{e}: S\rightarrow G_1$ and $e:
S\rightarrow G_2$.  Moreover, let $a_1:= (G_1:S^{\bar{e}})$ and $a_2:=
(G_2:S^e)$. By symmetry, we may suppose that $a_1\leq a_2$, we have
$a_1\geq2$ by our assumption that $(\Gamma(-), X, T)$ is normalised,
and the requirement that $\mu(\Gamma)=2$ boils down to the
(equivalent) equation 
\begin{equation}
\label{Eq:RankCase3}
a_1a_2 - a_1 - a_2 = \gcd(a_1, a_2).
\end{equation}
Since $\gcd(a_1, a_2) \leq a_1$, Equation~(\ref{Eq:RankCase3}) implies that
\begin{equation}
\label{Eq:RankCase3Ineq}
a_1 \leq a_2 \leq \frac{2a_1}{a_1-1},
\end{equation}
which in turn leads to $a_1^2\leq 3a_1$. Given our present
constraints, the last inequality is satisfied only for $a_1 = 2$ and
$a_1 = 3$. If $a_1=2$, then we find from (\ref{Eq:RankCase3Ineq}) that
$2\leq a_2\leq 4$, while, for $a_1=3$, we get $a_2=3$. Thus, the only
possibilities are 
\[
(a_1, a_2) = (2,2),\, (2,3),\, (2,4),\, (3,3),
\]
and, inserting these into (\ref{Eq:RankCase3}), the possible solution
$a_1=2=a_2$ is eliminated, while the remaining three pairs all solve
(\ref{Eq:RankCase3}), whence the result in that case. 

\vspace{2mm}

 (iv) \textit{$X$ consists of a segment $e_1$ with vertices
  $v_1$ and $v_2,$ say $t(e_1) = v_2,$ with a loop $e_2$ attached at
  $v_2$.} For $i=1,2$, set $G_i:= \Gamma(v_i)$, and let $S_i:=
\Gamma(e_i)$. Then $\Gamma = G_1 \underset{S_1}{\ast} \Gamma_2$, where
$\Gamma_2$ is the fundamental group of the loop $e_2$ with bounding
vertex $v_2$, and the canonical embeddings are given by the maps
$\bar{e}_1: S_1\rightarrow G_1$ and $\tilde{e}_1:
S_1\overset{e_1}{\rightarrow} G_2\rightarrow \Gamma_2$. Let $a_1:=
(G_1:S_1^{\bar{e}_1})$, $a_2:= (G_2:S_1^{e_1})$, and $a_2':=
(G_2:S_2^{e_2})$. Then 
\[
m_\Gamma = \lcm\!\big(\vert G_1\vert, \vert G_2\vert\big) 
= \vert S_1\vert\cdot \lcm(a_1, a_2) 
= \vert S_2\vert \cdot \lcm(a_1, a_2) a_2'/a_2,
\] 
and the condition that $\mu(\Gamma)=2$ translates into the equation
\begin{equation}
\label{Eq:RankCase4}
a_1a_2 + a_1a_2' - a_1 - a_2 = \gcd(a_1, a_2).
\end{equation}
Moreover, we have $a_1, a_2\geq2$ by our assumption that $(\Gamma(-),
X, T)$ is normalised, where $T$ is the unique spanning tree of
$X$. Suppose first that $a_1 \leq a_2$. Then (\ref{Eq:RankCase4})
gives 
\[
2 \leq a_1 \,\leq\, a_2 \leq \frac{(2-a_2')a_1}{a_1-1} \,\leq
\,\frac{a_1}{a_1-1}\, \leq\, 2. 
\]
This forces $a_1=a_2=2$, and from (\ref{Eq:RankCase4}) we deduce that
$a_2'=1$. Now suppose that $a_1\geq a_2$. Then (\ref{Eq:RankCase4})
yields 
\[
2 \leq a_2 \,\leq\, a_1 \,\leq \, \frac{2a_2}{a_2 + a_2'-1} \, \leq \, 2,
\]
which again leads to the solution $a_1 = a_2 = 2$ and
$a_2'=1$. Assertion (iv) now follows. 

\vspace{2mm}

(v) \textit{$X=T$ is a path $(v_1, e_1, v_2, e_2, v_3)$ of length
  $2$.} For $i = 1,2,3$, set $G_i:=\Gamma(v_i)$, and let
$S_j:=\Gamma(e_j)$ for $j=1,2$. Then $\Gamma = \big(G_1
\underset{S_1}{\ast} G_2\big) \underset{S_2}{\ast} G_3$. Since
$\mu(\Gamma)=2$, we have 
\begin{equation}
\label{Eq:RankCase5}
\frac{m_\Gamma}{\vert S_1\vert}\, + \,\frac{m_\Gamma}{\vert
  S_2\vert}\, -\, \frac{m_\Gamma}{\vert G_1\vert}\, -
\,\frac{m_\Gamma}{\vert G_2\vert}\, -\, \frac{m_\Gamma}{\vert
  G_3\vert} = 1. 
\end{equation}
As $(\Gamma(-), X, T)$ is normalised, we have
\[
\frac{m_\Gamma}{\vert G_1\vert} \,\leq\, \frac{m_\Gamma}{2\vert
  S_1\vert} \,\mbox{ and }\, \frac{m_\Gamma}{\vert G_2\vert}\, \leq\,
\frac{m_\Gamma}{2\vert S_2\vert}, 
\]
so that (\ref{Eq:RankCase5}) gives
\[
\frac{m_\Gamma}{2\vert S_1\vert} \,+\,\frac{m_\Gamma}{2\vert S_2\vert}
- \frac{m_\Gamma}{\vert G_3\vert}\, \leq \,1. 
\]
Again by normalisation,
\[
\frac{m_\Gamma}{\vert G_3\vert} \,\leq \, \frac{m_\Gamma}{2\vert S_2\vert},
\]
thus
\[
1\,\leq\,\max\left\{\frac{\vert G_1\vert}{2\vert S_1\vert},\,
\frac{\vert G_2\vert}{2\vert S_1\vert}\right\} \,\leq\,
\frac{m_\Gamma}{2\vert S_1\vert} \,\leq \, 1, 
\]
implying
\[
m_\Gamma = \vert G_1\vert = \vert G_2\vert = 2\vert S_1\vert. 
\]
Using this information in (\ref{Eq:RankCase5}), we now find that
\[
m_\Gamma\left(\frac{1}{\vert S_2\vert} \,-\,\frac{1}{\vert
  G_3\vert}\right) = 1, 
\]
implying first $m_\Gamma = 2 \vert S_2\vert$ by normalisation, and
then $\vert G_3\vert = m_\Gamma$. 
\end{proof}

\begin{remark}
{\em By considering the type and the number of conjugacy classes of maximal finite subgroups, one shows again that any two groups from different classes in Proposition~\ref{Prop:mu=2} are not isomorphic.}
\end{remark}

\subsection{Some consequences of Proposition~\ref{Prop:mu=2}}

Using the structural classification afforded by
Proposition~\ref{Prop:mu=2} in conjunction with
(\ref{Eq:GFTransform}), (\ref{Eq:G(z)Diff}), and
(\ref{Eq:G(z)DiffCoeffs}), we obtain, for each of the five cases in
Proposition~\ref{Prop:mu=2}, a recurrence relation for the
corresponding function $f_\lambda(\Gamma)$. The result is as follows: 

\vspace{2mm}

 (a) In Cases~(i) and (iv),
\begin{equation}
\label{Eq:mu=2Rec1}
f_{\lambda+1}(\Gamma) = \frac{2\lambda+3}{2} m_\Gamma
f_\lambda(\Gamma) \,+\,\sum_{\mu=1}^{\lambda-1} f_\mu(\Gamma)
f_{\lambda-\mu}(\Gamma),\quad \lambda\geq1.
\end{equation}

\vspace{2mm}

 (b) In Case~(ii),
\begin{equation}
\label{Eq:mu=2Rec2}
f_{\lambda+1}(\Gamma) = (\lambda+2) m_\Gamma
f_\lambda(\Gamma)\,+\,\sum_{\mu=1}^{\lambda-1} f_\mu(\Gamma)
f_{\lambda-\mu}(\Gamma),\quad \lambda\geq1.
\end{equation}

\vspace{2mm}

 (c) In Cases~(iii) and (v),
\begin{equation}
\label{Eq:mu=2Rec3}
f_{\lambda+1}(\Gamma) = (\lambda+1) m_\Gamma
f_\lambda(\Gamma)\,+\,\sum_{\mu=1}^{\lambda-1} f_\mu(\Gamma)
f_{\lambda-\mu}(\Gamma),\quad \lambda\geq1, 
\end{equation}
with corresponding initial conditions
\begin{enumerate}
\item[(a)] $f_1(\Gamma) = m_\Gamma^2/2$,

\vspace{2mm}

\item[(b)] $f_1(\Gamma) = m_\Gamma^2$,

\vspace{2mm}

\item[(c)] $f_1(\Gamma) = \begin{cases}
(m_\Gamma-\vert S\vert)\vert S\vert,& \mbox{Case~(iii)},\\
(m_\Gamma/2)^2,& \mbox{Case~(v).}
\end{cases}$
\end{enumerate}

We record two applications of
Equations~(\ref{Eq:mu=2Rec1})--(\ref{Eq:mu=2Rec3}) (and their initial
conditions). 

\begin{corollary}
\label{Cor:mu=2GrowthEst}
For a virtually free group $\Gamma$ with $\mu(\Gamma)=2$ and
$\Gamma\not\cong C_2\ast C_2\ast C_2,$ we have 
\begin{equation}
\label{Eq:mu=2GrowthEst}
f_{\lambda+1}(\Gamma) - f_\lambda(\Gamma)\,\geq\, m_\Gamma (\lambda + 1)!
\end{equation}
for all $\lambda\geq1$. For $\Gamma \cong C_2\ast C_2\ast C_2,$ the
estimate {\em (\ref{Eq:mu=2GrowthEst})} holds for all $\lambda\geq2$. 
\end{corollary}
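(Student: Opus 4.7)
The plan is to argue by induction on $\lambda$, using the three recurrences \eqref{Eq:mu=2Rec1}--\eqref{Eq:mu=2Rec3} together with their initial conditions. First I would observe that in each recurrence the leading coefficient of $m_\Gamma f_\lambda(\Gamma)$ on the right-hand side is bounded below by $\lambda+1$, while the convolution $\sum_{\mu=1}^{\lambda-1} f_\mu(\Gamma)f_{\lambda-\mu}(\Gamma)$ is non-negative. This yields, uniformly across Cases~(i)--(v) of Proposition~\ref{Prop:mu=2}, the estimate
\[
f_{\lambda+1}(\Gamma) - f_\lambda(\Gamma) \,\geq\, \big((\lambda+1)m_\Gamma - 1\big)\,f_\lambda(\Gamma).
\]

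The idea is then to carry the target inequality through induction together with the auxiliary lower bound $f_\lambda(\Gamma) \geq m_\Gamma\,\lambda!$. Inserting the latter into the displayed estimate reduces the inductive step to the numerical inequality $(\lambda+1)m_\Gamma \geq \lambda+2$, i.e.\ $m_\Gamma \geq (\lambda+2)/(\lambda+1)$. This holds automatically in Cases~(iii) and (v) of Proposition~\ref{Prop:mu=2}, where a direct inspection gives $m_\Gamma \geq 2$, and \emph{a fortiori} in Cases~(i), (ii), (iv), where the recurrence coefficient is strictly larger than $\lambda+1$ (the extremal case being $\Gamma \cong F_2$ with $m_\Gamma = 1$, where the coefficient equals $\lambda+2$ and the check holds with equality). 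The auxiliary bound then self-propagates along the induction via $f_{\lambda+1}(\Gamma) \geq f_\lambda(\Gamma) + m_\Gamma(\lambda+1)! \geq m_\Gamma(\lambda+1)!$.

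What remains is the base case. For $\lambda_0 = 1$, I would substitute the given $f_1(\Gamma)$ into the appropriate recurrence to compute $f_2(\Gamma)$ and verify $f_2(\Gamma) - f_1(\Gamma) \geq 2m_\Gamma$ directly; this is routine in Cases~(i)--(iv). The one place where the calculation fails is Case~(v): writing $m := m_\Gamma/2 = |S_1|$, one obtains $f_1(\Gamma) = m^2$ and $f_2(\Gamma) = 4m^3$, so that the required inequality $4m^3 - m^2 \geq 4m$ fails precisely at $m=1$, which singles out $\Gamma \cong C_2\ast C_2\ast C_2$. For this exceptional group I would launch the induction instead from $\lambda_0 = 2$ by direct computation: $m_\Gamma = 2$, $f_1 = 1$, $f_2 = 4$, and one application of \eqref{Eq:mu=2Rec3} gives $f_3 = 3\cdot 2\cdot 4 + 1 = 25$, so $f_3 - f_2 = 21 \geq 12 = m_\Gamma \cdot 3!$.

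The main obstacle — and essentially the only interesting point in the argument — is exactly this last calculation: the edge case $|S_1|=1$ in Case~(v) of Proposition~\ref{Prop:mu=2} is the unique place where the bound fails at $\lambda = 1$, and it corresponds precisely to $\Gamma \cong C_2\ast C_2\ast C_2$, forcing the two-part form of the conclusion. Everything else (the uniform lower bound on the recurrence coefficient, the propagation of the auxiliary bound, and the routine base-case checks in the other four classes) is mechanical book-keeping.
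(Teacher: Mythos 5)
Your argument is correct and follows the same route as the paper, which simply asserts that the corollary follows from the recurrences \eqref{Eq:mu=2Rec1}--\eqref{Eq:mu=2Rec3} and their initial conditions by an immediate induction on $\lambda$; you have filled in the details soundly, including the correct identification of $C_2\ast C_2\ast C_2$ (the case $\vert S_1\vert=1$ in class (v)) as the unique group for which the check fails at $\lambda=1$. One point worth keeping explicit: the auxiliary bound $f_\lambda(\Gamma)\geq m_\Gamma\,\lambda!$ can itself fail at $\lambda=1$ (e.g.\ for $C_2\ast C_3$ one has $f_1=5<6=m_\Gamma$), so it must indeed be seeded, as you do, from the directly verified base case $f_2-f_1\geq 2m_\Gamma$ via $f_{\lambda+1}\geq f_\lambda+m_\Gamma(\lambda+1)!$, rather than assumed at $\lambda=1$.
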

\begin{proof}
This follows from the above recurrence relations plus initial
conditions by an immediate induction on $\lambda$. 
\end{proof}

\begin{corollary}
\label{Cor:mu=2Parity}
Let $\Gamma$ be virtually free of rank $\mu(\Gamma)=2$. In the cases
{\em (iii)$_1,$} {\em (iii)$_3,$} and {\em (v)}, we have, with $\vert
S\vert \equiv 1\ (\text{\em mod }2)$ respectively $\vert S_1\vert
\equiv 1\ (\text{\em mod }2),$  
\[
f_\lambda(\Gamma) \equiv 1\ (\text{\em mod }2)\quad \mbox{  if, and
  only if,} \quad \lambda=2^m-1 \mbox{ for some integer } m\geq1. 
\]
In all other cases, the function $f_\lambda(\Gamma)$ is constant modulo $2$. 
\end{corollary}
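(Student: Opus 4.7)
The plan is to reduce each of the recurrences \eqref{Eq:mu=2Rec1}--\eqref{Eq:mu=2Rec3} modulo~$2$ using the stated initial conditions, and then to carry out a case-by-case analysis of the parity sequence across the five subcases of Proposition~\ref{Prop:mu=2}. A useful preliminary observation to set down is that $m_\Gamma$ is \emph{even} in every subcase except possibly~(ii) and~(iii)$_2$: in (i) and (iv) the base vertex group already contains an edge group of index~$2$, and in (iii)$_1$, (iii)$_3$, (v) one reads off directly that $m_\Gamma\in\bigl\{6\vert S\vert,\,4\vert S\vert,\,2\vert S_1\vert\bigr\}$. In particular, the rational coefficient $(2\lambda+3)m_\Gamma/2$ in \eqref{Eq:mu=2Rec1} is automatically an integer in each subcase to which that recurrence applies, so all three recurrences make sense modulo~$2$.

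For the ``constant modulo~$2$'' assertion I would first isolate the subcases in which $f_1(\Gamma)\equiv 0\pmod 2$: namely (i), (iv), (iii)$_2$, and (iii)$_1$, (iii)$_3$, (v) whenever $\vert S\vert$ (respectively $\vert S_1\vert$) is even. In each instance this is immediate from the explicit values $m_\Gamma^2/2,\,2\vert S\vert^2,\,5\vert S\vert^2,\,3\vert S\vert^2,\,\vert S_1\vert^2$. Since $m_\Gamma$ is even in each of these subcases, after reducing modulo~$2$ every term on the right-hand side of the pertinent recurrence is divisible by some $f_\mu(\Gamma)$ with $\mu\leq\lambda$, and a one-line induction on $\lambda$ yields $f_\lambda(\Gamma)\equiv 0\pmod 2$ for every $\lambda\geq 1$. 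The remaining constant-parity cases are (ii) for any $m_\Gamma$, and (iii)$_2$ with $\vert S\vert$ odd. In case~(ii) one splits by the parity of $m_\Gamma$: for $m_\Gamma$ even the same argument applies (now with $f_1=m_\Gamma^2\equiv 0$); for $m_\Gamma$ odd one has $f_1\equiv 1$ and a direct induction, using the identity $(\lambda+2)+(\lambda-1)=2\lambda+1\equiv 1\pmod 2$, gives $f_\lambda(\Gamma)\equiv 1$ throughout. Case (iii)$_2$ with $\vert S\vert$ odd is analogous and produces the constant~$0$.

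The substantive case is (iii)$_1$, (iii)$_3$, or (v) with $\vert S\vert$ (respectively $\vert S_1\vert$) odd: then $m_\Gamma$ is still even but $f_1(\Gamma)\equiv 1\pmod 2$, and recurrence \eqref{Eq:mu=2Rec3} collapses modulo~$2$ to the pure convolution
\[
f_{\lambda+1}(\Gamma)\equiv \sum_{\mu=1}^{\lambda-1} f_\mu(\Gamma)\,f_{\lambda-\mu}(\Gamma)\pmod 2,\qquad \lambda\geq 1,
\]
with $f_1(\Gamma)\equiv 1$. Introducing $F(z):=\sum_{\lambda\geq 1} f_\lambda(\Gamma)\,z^\lambda\in\mathbb{F}_2[[z]]$, this is equivalent to the functional equation $F(z)=z+zF(z)^2$. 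Substituting $F(z)=zH(z^2)$ identifies $H$ with the ordinary generating function of the Catalan numbers $C_n$, so $f_\lambda(\Gamma)\equiv C_{(\lambda-1)/2}\pmod 2$ for $\lambda$ odd, and $f_\lambda(\Gamma)\equiv 0\pmod 2$ for $\lambda$ even. The classical parity result (which follows, for example, from Kummer's theorem applied to $C_n=\binom{2n}{n}-\binom{2n}{n+1}$) that $C_n$ is odd if, and only if, $n+1$ is a power of~$2$ now translates precisely into the desired statement that $f_\lambda(\Gamma)\equiv 1\pmod 2$ exactly when $\lambda=2^m-1$ for some integer $m\geq 1$.

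The main obstacle is organisational rather than mathematical: one has to correctly track the parities of both $m_\Gamma$ and $f_1(\Gamma)$ across the many variants of Proposition~\ref{Prop:mu=2}. The analytic core of the argument—recognising the Catalan recurrence modulo~$2$ in the three distinguished subcases and invoking the classical parity pattern of the $C_n$—is by contrast very short.
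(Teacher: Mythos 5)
Your argument is correct, and its overall skeleton --- reduce each of \eqref{Eq:mu=2Rec1}--\eqref{Eq:mu=2Rec3} modulo~$2$, track the parities of $m_\Gamma$ and $f_1(\Gamma)$ across the subcases, and then induct --- is the same as the paper's. Where you genuinely diverge is in the core case (iii)$_1$/(iii)$_3$/(v) with $\vert S\vert$ (resp.\ $\vert S_1\vert$) odd: the paper stays with the recurrence and observes that the convolution $\sum_{\mu=1}^{\lambda-1}f_\mu f_{\lambda-\mu}$ collapses modulo~$2$, by pairing $\mu$ with $\lambda-\mu$, to $f_{\lambda/2}(\Gamma)$ for $\lambda$ even and to $0$ for $\lambda$ odd, after which a short induction on membership in $\Lambda=\{2^m-1\}$ finishes the proof; you instead encode the same convolution as the functional equation $F=z+zF^2$ over $\mathbb{F}_2$, substitute $F(z)=zH(z^2)$ to recognise the Catalan generating function, and import the classical fact that $C_n$ is odd exactly when $n+1$ is a power of~$2$. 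The two are equivalent (the Frobenius identity $F(z)^2=F(z^2)$ underlying your substitution is precisely the pairing symmetry the paper uses), but the paper's version is self-contained and elementary, while yours buys a conceptual identification of the parity sequence with the Catalan numbers at the cost of invoking an external result. Your bookkeeping in the ``constant'' cases is also sound --- including case (ii) with $m_\Gamma$ odd, where $f_\lambda\equiv1$ throughout, and case (iii)$_2$ --- though note a small organisational wrinkle: you list (iii)$_2$ among the subcases where ``$m_\Gamma$ is even'', which fails when $\vert S\vert$ is odd; since you treat that situation separately afterwards, and since the induction $f_1\equiv0\Rightarrow f_\lambda\equiv0$ only needs every right-hand term to carry a factor $f_\mu$ with $\mu\leq\lambda$ (not the evenness of $m_\Gamma$), nothing is actually broken.
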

\begin{proof}
We focus on Case~(iii)$_1$ with $\vert S\vert\equiv
1\ (\text{mod }2)$; the proof in Cases~(iii)$_3$ and (v) is completely
analogous, while the fact that $f_\lambda(\Gamma)$ is constant modulo
$2$ in all other cases is immediate. 

We denote by $\Lambda$ the set of integers of the form
$\lambda = 2^m-1,$ $m=1, 2, \ldots$, and prove the equivalence 
\begin{equation}
\label{Eq:mu=2IntParity}
f_\lambda(\Gamma) \equiv 1\ (\text{mod }2)
\quad \text{if, and only if,}\quad \lambda\in\Lambda,
\end{equation}
for $\lambda\geq1$ by induction on $\lambda$. The assumption that
$\vert S\vert \equiv 1\ (\text{mod }2)$ implies that 
\[
f_1(\Gamma) = 5\vert S\vert^2 \equiv 1\ (\text{mod }2),
\]
so that (\ref{Eq:mu=2IntParity}) is true for $\lambda=1$. Suppose that
(\ref{Eq:mu=2IntParity}) is true for all $\lambda\leq L$ with some
$L\geq1$, and consider $\lambda=L+1$. From (\ref{Eq:mu=2Rec3}) and the
fact that $m_\Gamma =6\vert S\vert \equiv 0\ (\text{mod }2)$, we infer
that, for $\lambda\geq1$, 
\[
f_{\lambda+1}(\Gamma) \equiv \left.\begin{cases}
f_{\lambda/2}(\Gamma)\ (\text{mod }2),& 2\mid \lambda,\\[1mm]
0\ (\text{mod }2),& 2\nmid \lambda.\end{cases}\right.
\]
If $L+1\in \Lambda$, i.e., $L+1 = 2^m-1$ for some $m\geq2$, then
$L=2(2^{m-1}-1)\equiv 0\ (\text{mod }2)$ and $L/2\in\Lambda$, thus
$f_{L+1}(\Gamma) \equiv 1\ (\text{mod }2)$ by the induction
hypothesis. Suppose, on the other hand, that $L+1\not\in \Lambda$. If
$L\equiv 1\ \text{mod }2)$, then $f_{L+1}(\Gamma) \equiv
0\ \text{mod }2)$. Thus we are left with the case where
$L+1\not\in\Lambda$ and $L\equiv 0\ \text{mod }2)$. But then
$L/2\not\in \Lambda$, and the induction hypothesis gives  
\[
f_{L+1}(\Gamma) \equiv f_{L/2}(\Gamma) \equiv 0\ (\text{mod }2),
\]
completing the proof.
\end{proof}

Corollary~\ref{Cor:mu=2Parity} serves well to illustrate
the main result of \cite{KraMu}:

\smallskip
 (1) In Case~(i) of
Proposition~\ref{Prop:mu=2}, we have $2\mid m_\Gamma$ and  
\[
\mu_2(\Gamma) = \begin{cases}
1,&\vert A\vert \equiv 1\ (\text{mod }2),\\
2,& \vert A\vert \equiv 0\ (\text{mod }2).\end{cases}
\]
In particular, we obtain that $\mu_2(\Gamma)>0$,
so Case~(III)$_2$ of \cite[Theorem~1]{KraMu} applies, asserting that
$f_\lambda(\Gamma)$ is ultimately periodic modulo $2$ in this case.
Indeed, by
Corollary~\ref{Cor:mu=2Parity}, $f_\lambda(\Gamma)$ is constant modulo
$2$. 

\smallskip
 (2) In Case~(ii) of Proposition~\ref{Prop:mu=2}, we either have
$2\nmid m_\Gamma$, or $2\mid m_\Gamma$ and $\mu_2(\Gamma)=2>0$, so
$f_\lambda(\Gamma)$ is ultimately periodic modulo $2$ in this case
according to Case~(III)$_1$ respectively (III)$_2$ of
\cite[Theorem~1]{KraMu}. Indeed, $f_\lambda(\Gamma)$ is again constant modulo
$2$ by Corollary~\ref{Cor:mu=2Parity}. 

\smallskip
 (3) In Case~(iii)$_1$ of
Proposition~\ref{Prop:mu=2}, we have $2\mid m_\Gamma$ and 
$$\mu_2(\Gamma) = \begin{cases}
0,&\vert S\vert \equiv 1\ (\text{mod }2),\\
2, & \vert S\vert \equiv 0\ (\text{mod }2),\end{cases}$$ 
so
$f_\lambda(\Gamma)$ is ultimately periodic modulo $2$ according to
\cite[Theorem~1]{KraMu} if, and only if, $\vert S\vert \equiv
0\ (\text{mod }2)$, which coincides with the corresponding assertion
of Corollary~\ref{Cor:mu=2Parity}. 

\smallskip
 (4) In Case~(iii)$_2$ of
Proposition~\ref{Prop:mu=2}, we either have $\vert S\vert \equiv
1\ (\text{mod }2)$, and so $2\nmid m_\Gamma = 3\vert S\vert$, or
$\vert S\vert \equiv 0\ (\text{mod }2)$, in which case $2\mid
m_\Gamma$ and $\mu_2(\Gamma) =2>0$. Hence, ultimate periodicity of the
function $f_\lambda(\Gamma)$ modulo $2$ follows again from
Case~(III)$_1$ 
 respectively Case~(III)$_2$
 of \cite[Theorem~1]{KraMu}, while Corollary~\ref{Cor:mu=2Parity}
 asserts that $f_\lambda(\Gamma)$ is constant modulo $2$ in that
 case. 

\smallskip
 (5) In Case~(iii)$_3$ of Proposition~\ref{Prop:mu=2}, we have
 $2\mid m_\Gamma = 4\vert S\vert$ and 
\[
\mu_2(\Gamma) = \begin{cases}
0,& \vert S\vert \equiv 1\ (\text{mod }2),\\
2,& \vert S\vert \equiv 0\ (\text{mod }2).\end{cases}
\]
Hence, according to \cite[Theorem~1]{KraMu}, the function
$f_\lambda(\Gamma)$ is ultimately periodic modulo $2$ if, and only if,
$\vert S\vert \equiv 0\ (\text{mod }2)$, which is in accordance with the corresponding assertion of Corollary~\ref{Cor:mu=2Parity}. 

\smallskip
 (6) In Case~(iv) of Proposition~\ref{Prop:mu=2}, we have $2\mid m_\Gamma = 2\vert S\vert$ and 
\[
\mu_2(\Gamma) = \begin{cases}
1,& \vert S\vert \equiv 1\ (\text{mod }2),\\
2, & \vert S\vert \equiv 0\ (\text{mod }2).\end{cases}
\]
In particular, we obtain that $\mu_2(\Gamma)>0$,
so that ultimate periodicity of $f_\lambda(\Gamma)$ follows from
Case~(III)$_2$ of \cite[Theorem~1]{KraMu}, while
Corollary~\ref{Cor:mu=2Parity} asserts that $f_\lambda(\Gamma)$ is
constant modulo~$2$ in that case. 

\smallskip
 (7) Finally, in Case~(v) of
Proposition~\ref{Prop:mu=2}, we have $2\mid m_\Gamma = 2\vert
S_1\vert$ and 
\[
\mu_2(\Gamma) = \begin{cases}
0, & \vert S_1\vert \equiv 1\ (\text{mod }2),\\
2,& \vert S_1\vert \equiv 0\ (\text{mod }2).\end{cases}
\]
Hence, according to \cite[Theorem~1]{KraMu}, the function
$f_\lambda(\Gamma)$ is ultimately periodic modulo $2$ if, and only if,
$\vert S_1\vert \equiv 0\ (\text{mod }2)$, in accordance with the
corresponding assertion of Corollary~\ref{Cor:mu=2Parity}. 

\section{Some criteria for a virtually free group to be `large'}
\label{Sec:Large}

 Our final result collects together a number of equivalent
conditions on 
a finitely generated virtually free group $\Gamma$ which all say, in
one way or another, that $\Gamma$ is `large' in some particular
sense. Perhaps the most obvious condition in this direction is given
by Pride's concept of being `as large as a free group of rank
$2$'. The concept of `largeness' for groups, first introduced by
S. Pride in \cite{Pride}, and further developed in \cite{EP}, depends
on a certain preorder $\preceq$ on the class of groups, defined in
\cite{EP} as follows: let $G$ and $H$ be groups. Then we write
$H\preceq G$, if there exist 
\begin{enumerate}
\item[(a)] a subgroup $G^0$ of finite index in $G$;
\vspace{2mm}

\item[(b)] a subgroup $H^0$ of finite index in $H$, and a finite
normal subgroup $N^0$ of $H^0$; 
\vspace{2mm}

\item[(c)] a homomorphism from $G^0$ onto $H^0/N^0$.
\end{enumerate}
We write $H\sim G$ if $H\preceq G$ and $G\preceq H$, and we denote by
$[G]$ the equivalence class of the group $G$ under $\sim$. By abuse of
notation, we also denote by $\preceq$ the preorder induced on the
class of equivalence classes of groups. The finitely generated groups
which are `largest' in Pride's sense are the ones having a subgroup of
finite index which can be mapped homomorphically onto the free group
$F_2$ of rank $2$. 

 Another, more topological, way of saying that a finitely generated
virtually free group is `large', is that it has infinitely many
ends. Here, the number $e(\Gamma)$ of ends of a group $\Gamma$ is
defined as 
\[
e(\Gamma) = \begin{cases}
\dim H^0(\Gamma, \mathrm{Hom}_\Z(\Z\Gamma, \Z_2)/\Z_2\Gamma),&
\mbox{if $\Gamma$ is infinite,}\\[2mm] 
0,& \mbox{if $\Gamma$ is finite.}
\end{cases}
\]
The reader is referred to \cite{Cohen1} or \cite[Sec.~2]{Cohen2} for
an  introduction to the theory of ends of a group from an algebraic
point of view; for a discussion from a more topological viewpoint,
see, for instance, \cite{Freud}, \cite{Hopf}, or \cite{Specker}.  

\begin{proposition}
\label{Prop:chi<0Equivs}
Let\/ $\Gamma$ be a finitely generated virtually free group, and let
$(\Gamma(-), X)$ be a finite graph of finite groups with fundamental
group $\Gamma,$ chosen so as to satisfy the normalisation condition 
\eqref{eq:normal} of
Lemma~{\em \ref{Lem:Normalise}}. Then the following assertions on
$\Gamma$ are equivalent:

\begin{enumerate}
\item[(i)] $\chi(\Gamma) < 0$.
\vspace{2mm}

\item[(ii)] $\mu(\Gamma) \geq2$.
\vspace{2mm}

\item[(iii)] $\Gamma$ has infinitely many ends.
\vspace{2mm}

\item[(iv)] The function $f_\lambda(\Gamma)$ is strictly increasing.
\vspace{2mm}

\item[(v)] $\Gamma\sim F_2$ in the sense of Pride's preorder $\preceq$
on groups, where $F_2$ denotes the free group of rank $2$. 
\vspace{2mm}

\item[(vi)] $\Gamma$ has fast subgroup growth in the sense that the
inequality $s_{nj}(\Gamma) \geq c\cdot n!$ holds for some fixed positive
integer $j,$  some constant $c>0,$ and all $n\geq1$. Here $s_m(\Gamma)$ denotes the number of
subgroups of index $m$ in $\Gamma$. 
\vspace{2mm}

\item[(vii)] If $X$ has only one vertex $v,$ then either $X$ has more
than one geometric edge, or $E(X) = \{e_1, \bar{e}_1\}$ and
$(\Gamma(v): \Gamma(e_1)^{e_1}) \geq2;$ if $\vert V(X)\vert \geq2,$
then $X$ is not a tree, or $X$ is a tree with more than one geometric
edge, or $E(X) = \{e_1, \bar{e}_1\}$ and $\chi(\Gamma_0)<0,$ where
$\Gamma_0:= \Gamma_{o(e_1)} \underset{\Gamma(e_1)}{\ast}
\Gamma_{t(e_1)}$. 
\end{enumerate}
\end{proposition}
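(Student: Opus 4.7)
My plan is to prove the seven equivalences in a star pattern centred on (ii), since $\mu(\Gamma)\geq 2$ is the cleanest structural condition to work with.

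The equivalence $(i)\Leftrightarrow(ii)$ is immediate from~\eqref{Eq:FreeEuler}: since $\mu(\Gamma)\in\Z_{\geq 0}$ and $m_\Gamma\geq 1$, the identity $\mu(\Gamma)=1-m_\Gamma\chi(\Gamma)$ forces $\chi(\Gamma)<0$ to coincide with $\mu(\Gamma)\geq 2$. For $(i)\Leftrightarrow(vii)$, I carry out a case analysis on $X$, mirroring the computations in the proofs of Lemma~\ref{Lem:EEst} and Proposition~\ref{Prop:InfCyc}. When $|V(X)|=1$, formula~\eqref{Eq:EulerCharDecomp} gives $\chi(\Gamma)=m_\Gamma^{-1}\bigl(1-\sum_{e\in\mathcal{O}(X)}(\Gamma(v):\Gamma(e)^e)\bigr)$, which is negative precisely in the two subcases of the one-vertex part of (vii). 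When $|V(X)|\geq 2$, I pick a root edge $e_1\in E(T)$ and extend $\mathcal{O}_{v_0}(T)$ (with $v_0=o(e_1)$) to an orientation $\mathcal{O}(X)$ of $X$; splitting $\chi(\Gamma)$ into root, non-root tree, and non-tree contributions exactly as in Lemma~\ref{Lem:EEst}, normalisation makes each of the three pieces $\leq 0$, with $\chi(\Gamma)<0$ forced iff $X$ has a non-tree edge, or $T$ has more than one edge, or $E(X)=\{e_1,\bar e_1\}$ with $\chi(\Gamma_0)<0$ --- matching (vii) case by case.

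For $(ii)\Leftrightarrow(v),(iii),(vi)$, I work through a free subgroup $\mathfrak F$ of $\Gamma$ of rank $\mu(\Gamma)$ and index $m_\Gamma$. For $(ii)\Leftrightarrow(v)$: if $\mu(\Gamma)\geq 2$, then the surjection $\mathfrak F\twoheadrightarrow F_2$ gives $F_2\preceq\Gamma$, which together with the universal bound $\Gamma\preceq F_2$ (valid for every finitely generated group, by pulling back a surjection $F_n\twoheadrightarrow\Gamma$ to a rank-$n$ finite-index subgroup of $F_2$) yields $\Gamma\sim F_2$; conversely, if $F_2\preceq\Gamma$, a surjection $\Gamma^0\twoheadrightarrow F_2^0$ from some finite-index $\Gamma^0\leq\Gamma$ restricts on $\mathfrak F\cap\Gamma^0$ to a surjection from a free group onto a finite-index (hence rank-$\geq 2$) subgroup of $F_2^0$, and the Nielsen--Schreier rank formula then forces $\mu(\Gamma)=\mathrm{rk}(\mathfrak F)\geq 2$. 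For $(ii)\Leftrightarrow(iii)$: the number of ends is a commensurability invariant, and $e(F_r)$ equals $0,2,\infty$ according as $r=0,1,\geq 2$, so $e(\Gamma)=e(\mathfrak F)$ is infinite iff $\mu(\Gamma)\geq 2$. For $(ii)\Leftrightarrow(vi)$: if $\mu(\Gamma)\geq 2$, M.~Hall's classical formula yields $s_n(\mathfrak F)\sim n\,(n!)^{\mu(\Gamma)-1}$, and since every subgroup of index $n$ in $\mathfrak F$ has index $nm_\Gamma$ in $\Gamma$, we obtain $s_{nm_\Gamma}(\Gamma)\geq c\cdot n!$ for some $c>0$, giving (vi) with $j=m_\Gamma$; conversely, if $\mu(\Gamma)\leq 1$, Proposition~\ref{Prop:InfCyc} shows $\Gamma$ is finite or virtually infinite cyclic and so has subgroup growth at most linear, ruling out (vi).

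Finally, for $(ii)\Leftrightarrow(iv)$: when $\mu(\Gamma)\leq 1$, $f_\lambda(\Gamma)$ is constant (vanishing for $\mu=0$, and by Corollary~\ref{Cor:mu=1} for $\mu=1$), ruling out strict monotonicity. When $\mu(\Gamma)\geq 2$, extracting coefficients of $z^{n-1}$ from the order-$\mu(\Gamma)$ ODE~\eqref{Eq:G(z)Diff} yields a recursion expressing $m_\Gamma n\,g_n(\Gamma)$ as $g_{n-1}(\Gamma)$ times a polynomial in $n$ of degree $\mu(\Gamma)$ with positive leading coefficient (a finite-difference computation based on~\eqref{Eq:G(z)DiffCoeffs} and \eqref{Eq:EulerTypeRewrite} identifies this leading coefficient as $m_\Gamma^{\mu(\Gamma)}$); consequently $g_n(\Gamma)/g_{n-1}(\Gamma)$ grows like $c\,n^{\mu(\Gamma)-1}$ as $n\to\infty$. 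Substituting into the convolution~\eqref{Eq:Transform} and inducting on $\lambda$, one shows that the dominant term $m_\Gamma(\lambda+1)g_{\lambda+1}(\Gamma)$ in the expression for $f_{\lambda+1}(\Gamma)$ overpowers the convolution of previous $f_k(\Gamma)$'s, yielding $f_{\lambda+1}(\Gamma)>f_\lambda(\Gamma)$. I expect this last step to be the main obstacle: for $\mu(\Gamma)=2$, Corollary~\ref{Cor:mu=2GrowthEst} already supplies the clean quantitative gap $f_{\lambda+1}-f_\lambda\geq m_\Gamma(\lambda+1)!$, but in higher rank one must carefully control the sign pattern of the coefficients $\theta_\mu(\Gamma)$ from~\eqref{Eq:G(z)DiffCoeffs} and propagate the induction through the convolution without sign cancellations swamping the leading term.
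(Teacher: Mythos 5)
Most of your argument coincides with the paper's: (i)$\Leftrightarrow$(ii) via \eqref{Eq:FreeEuler}; the Euler-characteristic splitting along $\mathcal{O}_{v_0}(T)$ for (vii); invariance of the number of ends under passage to finite-index subgroups plus the computation of $e(F_r)$ for (iii); and the Newman/Hall growth estimate for free groups together with the polynomial subgroup-growth bound for virtually cyclic groups for (vi). Your only departures there are organisational -- you prove (ii)$\Leftrightarrow$(v) and (ii)$\Leftrightarrow$(vi) as direct equivalences, whereas the paper closes the cycle (ii)$\Rightarrow$(v)$\Rightarrow$(vi)$\Rightarrow$(ii); both work, and your converse for (v) via Nielsen--Schreier is correct.

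The genuine gap is in (ii)$\Rightarrow$(iv). You propose to extract from \eqref{Eq:G(z)Diff} the first-order hypergeometric recursion for $g_n(\Gamma)$, conclude that $g_n(\Gamma)/g_{n-1}(\Gamma)$ grows like $n^{\mu(\Gamma)-1}$, and then ``push this through'' the convolution \eqref{Eq:Transform}. But \eqref{Eq:Transform} reads $f_\lambda(\Gamma)=m_\Gamma\lambda\, g_\lambda(\Gamma)-\sum_{\mu=1}^{\lambda-1}g_\mu(\Gamma)f_{\lambda-\mu}(\Gamma)$, with the earlier $f$'s entering with a \emph{negative} sign, so growth of the $g$-ratio alone does not yield $f_{\lambda+1}(\Gamma)>f_\lambda(\Gamma)$: one needs simultaneous upper and lower bounds on $f_\lambda(\Gamma)$ relative to $m_\Gamma\lambda g_\lambda(\Gamma)$ to keep the subtracted convolution under control, and you explicitly leave this induction (and the sign analysis of the $\theta_\mu(\Gamma)$ in \eqref{Eq:G(z)DiffCoeffs}) unresolved, calling it ``the main obstacle.'' The paper does not attempt any of this: it disposes of (ii)$\Leftrightarrow$(iv) by citing Theorem~4 of \cite{MuDiss}, which asserts precisely that $f_\lambda(\Gamma)$ is strictly increasing when $\mu(\Gamma)\geq2$, combined with Corollary~\ref{Cor:mu=1} for the virtually infinite-cyclic case (and note, for $\mu(\Gamma)=0$, that $f_\lambda(\Gamma)$ is not identically zero but equals $1$ for $\lambda=1$ and $0$ thereafter -- still not increasing, so the conclusion stands). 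As written, your proof of (ii)$\Rightarrow$(iv) is a programme rather than a proof; either carry out the two-sided induction in full (Corollary~\ref{Cor:mu=2GrowthEst} shows the shape of the quantitative statement in rank $2$) or cite the known monotonicity result.
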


\begin{proof} 
(i) $\Leftrightarrow$ (ii). This is immediate from
Formula~\eqref{Eq:FreeEuler} plus the fact that $\mu(\Gamma)$ is
integral. 

\smallskip
 (ii) $\Leftrightarrow$ (iii). This follows from
\cite[Prop.~2.1]{Cohen2} (i.e., the fact that the number of ends is
invariant when passing to a subgroup of finite index) and Examples~1
and 2 in \cite{Cohen2} computing the number of ends of a free product,
respectively of $C_\infty$.  

\smallskip
 (ii) $\Leftrightarrow$ (iv). This follows from
\cite[Theorem~4]{MuDiss} in conjunction with
Corollary~\ref{Cor:mu=1}. 

\smallskip
 (ii) $\Rightarrow$ (v). If $\mu(\Gamma)\geq2$, then $\Gamma$ contains
a free group $F$ of rank at least $2$, with $(\Gamma:F) =
m_\Gamma<\infty$; in particular, $F_2 \preceq \Gamma$. 
Since $[F_2]$ is largest with
respect to the preorder $\preceq$ among all equivalence classes of
finitely generated groups, we also have $\Gamma \preceq F_2$, so
$\Gamma \sim F_2$, as claimed. 

\smallskip
 (v) $\Rightarrow$ (vi). Suppose that $\Gamma\sim F_2$. Then there
exists a subgroup $\Delta\leq \Gamma$ of index
$(\Gamma:\Delta)=j<\infty$ and a surjective homomorphism $\varphi:
\Delta\rightarrow F_2$. From this plus Newman's asymptotic estimate
\cite[Theorem~2]{Newman} 
\[
s_n(F_r) \sim n (n!)^{r-1}\mbox{ as }n\rightarrow\infty,\quad r\geq2,
\]
it follows that
\[
s_{jn}(\Gamma) \geq s_n(\Delta) \geq s_n(F_2) \geq c\cdot n\cdot n! \geq
c\cdot n!
\]
for $n\geq1$ and some constant $c>0$, whence (vi).

\smallskip
 (vi) $\Rightarrow$ (ii). If $\mu(\Gamma)\leq 1$, then either $\Gamma$
is finite, so $s_n(\Gamma)=0$ for sufficiently large $n$, or $\Gamma$
is virtually infinite-cyclic, implying 
\[
s_n(\Gamma) \leq n^\alpha,\quad n\geq1,
\]
for some constant $\alpha$, by \cite[Cor.~1.4.3]{LS}; see also
\cite{Segal}. In both cases, Condition~(vi) does not hold. 

\smallskip
 (ii) $\Leftrightarrow$ (vii). This follows by splitting the Euler
characteristic $\chi(\Gamma)$ as in the proof of
Proposition~\ref{Prop:InfCyc}, making use of
Lemmas~\ref{Lem:Normalise} and ~\ref{Lem:TreeOrient}. 
\end{proof}

\end{document}